\documentclass{svproc}
\usepackage{amsmath}
\usepackage{graphicx}

\usepackage{url}

\def\Cl{\operatorname{Cl}}
\def\Conv{\operatorname{Conv}}
\def\B{\operatorname{B}}
\def\Comp{{\operatorname{Comp}}}
\def\Int{\operatorname{Int}}
\def\deg{\operatorname{deg}}
\def\mpn{\operatorname{mpn}}
\def\smt{\operatorname{smt}}

\begin{document}
\mainmatter     
\title{Minimal Parametric Networks in Hyperspaces and their Properties}
\titlerunning{Minimal Parametric Networks in Hyperspaces}  
\author{Arsen Galstyan}
\institute{
School of Mathematics, Harbin Institute of Technology, Harbin, Heilongjiang 150001, China,\\
\email{ares.1995@mail.ru}.
}

\maketitle              

\begin{abstract}\footnote{The work of Arsen Galstyan was carried out at Harbin Institute of Technology with the support of the Postdoctoral Research Start-up Funds (Funding card number: AUGA5710026125).}

This work investigates minimal parametric networks in hyperspaces of closed subsets of metric spaces endowed with the Hausdorff distance. It is shown that the problems of finding such networks are nontrivial only within finiteness classes, where all Hausdorff distances between elements are finite. It is demonstrated that when studying the properties of minimal parametric networks, it is convenient to view their interior vertices as solutions of the Fermat--Steiner problem on the adjacent vertices. In this connection, already within the framework of the Fermat--Steiner problem, the structure of solution classes in hyperspaces of closed subsets of metric spaces is described. Results on the existence of $d$-far points in the case of convex boundary sets are also generalized. Namely, conditions are shown under which realizing one-sided Hausdorff distances holds.

\keywords{hyperspaces, convex sets, Hausdorff distance, Steiner problem, minimal networks}
\end{abstract}

\section*{Introduction}

The Steiner problem, which asks for a network of minimal total length connecting a given finite set of points in a metric space, is a classical optimization problem with numerous applications in science and engineering (see, e.g., \cite{Hwang}, \cite{Review}, \cite{Cheng}). The concept of a minimal parametric network (see Section~\ref{chap:min_net}) generalizes this problem by fixing the topology (the graph) of the network and optimizing the positions of its vertices in the ambient space (see, e.g., \cite{IT}, \cite{Branching}, \cite{Borodin}, \cite{Tropin}). 

From the very first formulation by Pierre de Fermat on the vertices of a triangle up to the works of the end of the last century, the Steiner problem was considered exclusively in finite-dimensional normed spaces (see, e.g., \cite{Hwang}, \cite{Du}, \cite{Bern}, \cite{Cieslik}). In \cite{IT}, Ivanov and Tuzhilin studied the Steiner problem on a connected complete Riemannian manifold. Moreover, in~\cite{IT} they considered the Steiner problem within the framework of searching for minimal parametric networks. In~\cite{Borodin} and~\cite{Bednov}, Borodin, Bednov and Strelkova studied the existence of minimal networks in Banach spaces. In recent years, Ivanov, Tuzhilin, Tropin and Galstyan wrote a series of papers investigating minimal parametric networks in hyperspaces (see Definition~\ref{dfn:hyperspace}) of compact subsets of finite-dimensional normed spaces (see \cite{Tropin}, \cite{ITT}, \cite{Tropin_2}, \cite{Gals_1}, \cite{Gals_2}). Moreover, some statements in these works were formulated in the context of hyperspaces of compact sets of metric spaces or proper metric spaces.

The theory of hyperspaces, which originated at the beginning of the 20th century thanks to the fundamental works of Hausdorff and Vietoris, has grown into a vast area of knowledge closely related to topology, functional analysis, and geometric measure theory (see, e.g., \cite{Nadler}, \cite{Blackburn}). The seminal 1942 paper of Kelley~\cite{Kelley} generated sustained interest in hyperspaces, and subsequent investigations showed (see, e.g., the book \cite{SetAn}) that these spaces provide a natural framework for studying set-valued analysis and variational problems.

The present paper continues and generalizes the research initiated in \cite{ITT} and \cite{Gals_1}. In Section~\ref{sec:tasks}, the problems of finding a minimal parametric network and a Steiner network are formulated in the context of extended metric spaces. An extended metric space differs from an ordinary metric space in that infinite distances are allowed. The problems are intentionally stated in such a space because the main goal of this work is to extend a number of results from \cite{ITT} and \cite{Gals_1} to hyperspaces of unbounded subsets, where the Hausdorff distance $d_H$ (see Definitions~\ref{distance_H} and~\ref{distance_H2}) can be equal to $\infty$. However, it is shown in Section~\ref{sec:tasks} that if the initial set of points to be connected by a minimal (parametric) network contains points that are at infinite distance from each other, then the problem of finding a minimal network becomes trivial. Namely, every network connecting such a set has length $\infty$ (see Proposition~\ref{prop:infty_len}). On the other hand, if all pairwise distances in the initial set are finite, then the infimum of network lengths is also finite (see Proposition~\ref{prop:finite_len_on_bound_points}), and in this case the problem of finding a minimal network becomes meaningful. Therefore, for the case of hyperspaces of unbounded subsets, the problems of finding minimal networks should be considered within finiteness classes (see Definition~\ref{dfn:finiteness}) of such hyperspaces. We denote the finiteness class of a hyperspace $\mathcal{P}_{*}(X)$ everywhere by $\mathcal{P}_{*}^f(X)$.

In Subsection~\ref{chap:connect}, a connection is also established between the particular Fermat--Steiner problem (see Definition~\ref{dfn:Fer_St}) and the problem of finding a minimal (not necessarily parametric) network. Namely, it is shown there that each interior vertex (see Definition~\ref{dfn:network_vert}) of a minimal network can be regarded as a solution of the Fermat--Steiner problem on its adjacent vertices (see Proposition~\ref{prop:Fer_St}). Thus, by discovering properties of solutions of the Fermat--Steiner problem in a specific type of metric spaces, we automatically extend these properties to all interior vertices of all minimal parametric networks, including minimal Steiner trees. This is why, even though properties of solutions of the Fermat--Steiner problem are studied in Section~\ref{Main}, that section is entitled ``Properties of Internal Vertices of Minimal Parametric Networks in Hyperspaces''. The main results of the paper are presented in Section~\ref{Main}.

Each solution $K$ of the Fermat--Steiner problem on a family of subsets \linebreak $\mathcal{M}=\{M_1, \ldots, M_n\}$ of a metric space $X$ corresponds to its own vector $d = \bigl(d_1=d_H(K, M_1), \ldots, d_n=d_H(K, M_n)\bigr)$. Different solutions $K_1$ and $K_2$ may correspond to the same vector $d$. Thus, for each vector $d$ we obtain a whole class of solutions $\Sigma_d(\mathcal{M})$, partially ordered by inclusion.

Subsection~\ref{chap:struct} describes the structure of such classes $\Sigma_d(\mathcal{M})$. Proposition~\ref{prop:matreshka} on the membership of intermediate subsets in the same solution class $\Sigma_d(\mathcal{M})$ generalizes Assertion~3.4 from~\cite{ITT} to the case of hyperspaces of closed subsets of a metric space. Proposition~\ref{prop:great} on the existence of a greatest element $K_d$ in $\Sigma_d(\mathcal{M})$ also extends the corresponding result Assertion~3.5 from~\cite{ITT} to the case of hyperspaces of closed subsets of a metric space. Proposition~\ref{prop:min_cl} on the existence of a minimal element in $\Sigma_d(\mathcal{M})$ generalizes the result Assertion~3.1 from~\cite{ITT} to the case of hyperspaces of closed subsets of a proper metric space.

Subsection~\ref{chap:one_dist} is devoted to a generalization of the result of Theorem~3 from~\cite{Gals_1} on the existence of $d$-far points (see Definition~\ref{dfn:far}) for solutions of the Fermat--Steiner problem. For finite-dimensional normed spaces and compact convex boundary sets, it was proved in~\cite{Gals_1} that there exists an index $i$ and a point $x \in M_i$ such that $U_{d_i}(x) \cap K_d = \emptyset$, i.e., $M_i$ has a $d$-far point. This implies $d_i = \sup_{x \in M_i} |x\, K_d|$. In other words, at least one of the summands in $S_{\mathcal{M}}(K_d)$ is one-sided, determined by the distances from the boundary set to the solution.

The main goal of Subsection~\ref{chap:one_dist} is to extend this result to a much broader setting. We drop the assumptions of finite-dimensionality and compactness and work with arbitrary closed convex subsets of a (possibly infinite-dimensional) normed space. The principal result (Corollary~\ref{cor:one_side}) states that if all $M_i$ lie in a finiteness class of closed convex sets, $\Sigma_d(\mathcal{M})\neq \emptyset$, and a certain technical condition on $K_d$ is satisfied, then, for any $K \in \Sigma_d(\mathcal{M})$, there exists an index $i$ for which $d_i = \sup_{x \in M_i} |x\, K|$.

It turns out that in hyperspaces of closed convex subsets of a two-dimensional normed space, the technical condition on $K_d$ from Corollary~\ref{cor:one_side} is satisfied automatically. Thus, we obtain Corollary~\ref{cor:one_side_2_dim}. Also, the technical condition on $K_d$ from Corollary~\ref{cor:one_side} is satisfied automatically in hyperspaces of closed bounded convex subsets of a normed space. Therefore we also have Corollary~\ref{cor:one_side_bound}.

We also address the additional question of whether all the distances $d_i$ can be ``realized'' from only one side: $d_i = \sup_{x \in M_i} |x\, K_d|$. Theorem~\ref{thm:one_side_two} shows that, in any connected metric space, for the greatest element $K_d\in \Sigma_d(\mathcal{M})$, there exists an index $i$ such that $d_i = \sup_{x \in K_d} |x\, M_i|$.

\subsection*{Acknowledgments}

The author wishes to express his sincere gratitude to Professor A. A. Tuzhilin and Professor A. O. Ivanov for their valuable remarks.

\section{Necessary Definitions and Statements}

For convenience, in the case of a metric space $(X,\rho)$, the distance between two points $a, b\in X$ will be denoted by $|a\,b|$ instead of $\rho(a, b)$. Also, instead of $(X,\rho)$, we will write $X$.

Let, $X$ be a metric space. The following notations will be used in many places in the text:
\begin{eqnarray*}
  \mathcal{P}_0(X)&:=&\{A\subset X\colon A \text{ is nonempty}\}; \\
  \mathcal{P}_{\Cl}(X)&:=&\{A\subset X\colon A \text{ is nonempty and closed}\}; \\
  \mathcal{P}_{\Cl, \B}(X)&:=&\{A\subset X\colon A\text{ is nonempty, closed and bounded}\}.
\end{eqnarray*}
If $X$ is a real normed space, then 
\begin{eqnarray*}
  \mathcal{P}_{\Conv}(X)&:=&\{A\subset X\colon A\text{ is nonempty and convex}\}; \\
  \mathcal{P}_{\Cl, \Conv}(X)&:=&\{A\subset X\colon A\text{ is nonempty, closed and convex}\}; \\
  \mathcal{P}_{\Cl, \B, \Conv}(X)&:=&\{A\subset X\colon A\text{ is nonempty, closed, bounded and convex}\}; \\
  \mathcal{P}_{\Comp, \Conv}(X)&:=&\{A\subset X\colon A\text{ is nonempty, compact and convex}\}.
\end{eqnarray*}

\begin{definition}\label{dfn:hyperspace}
The described sets of view $\mathcal{P}_{*}(X)$ endowed with Hausdorff distance \emph{(see Definitions~\ref{distance_H} and~\ref{distance_H2})} are called hyperspaces.
\end{definition}

\subsection{Intersection of Segment with Boundary}

\begin{lemma}[{\cite[Lemma~2]{Curves}}]
Let $M$ be a nonempty subset of a connected topological space $X$, not coinciding with $X$. Then $\partial M\ne\emptyset$.
\end{lemma}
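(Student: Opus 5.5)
I will argue by contradiction, using only the definition of the boundary as $\partial M=\Cl M\setminus\Int M$ (equivalently, the set of points every neighbourhood of which meets both $M$ and $X\setminus M$) and the definition of connectedness, namely that $X$ cannot be written as a union of two disjoint nonempty open sets.

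Suppose $\partial M=\emptyset$. Then $\Cl M=\Int M$. Since $\Int M\subset M\subset\Cl M$, this gives $M=\Int M=\Cl M$. Hence $M$ is simultaneously open and closed in $X$. Consequently $X\setminus M$ is also open. The two sets $M$ and $X\setminus M$ are disjoint and cover $X$. Moreover, $M\neq\emptyset$ by hypothesis, and $X\setminus M\neq\emptyset$ because $M\neq X$. This is a separation of $X$ into two nonempty disjoint open sets, contradicting the connectedness of $X$. Therefore $\partial M\neq\emptyset$.

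There is no genuine obstacle here. The only point requiring care is to use the characterization $\partial M=\Cl M\setminus\Int M$ consistently, so that the emptiness of the boundary translates exactly into $M$ being clopen. Once that is in place, the argument is a direct application of the definition of a connected space.
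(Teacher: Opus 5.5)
Your proof is correct: $\partial M=\emptyset$ forces $\Cl M=\Int M$, so $M$ is both open and closed, and $\emptyset\neq M\neq X$ then contradicts connectedness. The paper does not reprove this lemma, since it is quoted from an earlier work; your argument is the standard one and uses the same clopen reasoning the paper itself applies in the proof of Theorem~\ref{thm:one_side_two}.
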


We will also need the following

\begin{lemma}[{\cite[Lemma~3]{Wills}}]\label{light}
Let $B$ be a nonempty subset of a real normed space $X$. Then for any $a\in X\setminus\Int B$ and $b\in \Cl B$ we have $[a, b]\cap\partial B\ne\emptyset$.
\end{lemma}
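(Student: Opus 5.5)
The plan is to reduce the statement to the preceding lemma from~\cite{Curves}, applied to the segment $[a,b]$ viewed as a connected topological space, with the relevant subset being the trace of $B$ on it. First I dispose of the degenerate cases. If $a\in\partial B$ we are done, since $a\in[a,b]$. Otherwise $a\notin\Cl B$, because $a\notin\Int B$ and $\Cl B=\Int B\cup\partial B$. Similarly, if $b\in\partial B$ we are done. Otherwise $b\in\Int B$, since $b\in\Cl B$.

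So assume $a\notin\Cl B$ and $b\in\Int B$. I parametrize the segment by $\gamma(t)=(1-t)a+tb$, $t\in[0,1]$, which is continuous, and set
\[
T=\{t\in[0,1]\colon \gamma(t)\in\Int B\}.
\]
This set is open in $[0,1]$, contains $1$, and does not contain $0$. Since $[0,1]$ is connected, $T$ is not clopen: it is nonempty and not the whole interval. By the lemma from~\cite{Curves}, $T$ has a boundary point $t_0$ relative to $[0,1]$. Because $T$ is open, $t_0\notin T$, so $\gamma(t_0)\notin\Int B$. Being a boundary point, $t_0$ is a limit of points $t_n\in T$, so $\gamma(t_0)$ is a limit of the points $\gamma(t_n)\in\Int B\subset B$, and hence $\gamma(t_0)\in\Cl B$. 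Therefore $\gamma(t_0)\in\Cl B\setminus\Int B=\partial B$, and $\gamma(t_0)\in[a,b]$.

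An alternative, more elementary route avoids the lemma: take $t_0=\inf T$. Openness of $T$ gives $t_0\notin T$, and $t_0$ is a limit of points of $T$, which yields the same conclusion.

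The main point requiring care is the initial case reduction. The hypothesis is only $a\notin\Int B$ and $b\in\Cl B$, so one must either handle the endpoints lying in $\partial B$ separately, as above, or work directly with $\Int B$ and $\Cl B$. The convexity of the normed space is used only to guarantee that $[a,b]$ is a continuous image of $[0,1]$; everything else is routine.
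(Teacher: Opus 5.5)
Your proof is correct and complete. You pull back to $[0,1]$ via $\gamma$, take a relative boundary point $t_0$ of the open set $T=\gamma^{-1}(\Int B)$ (or simply $t_0=\inf T$), and then verify directly that $\gamma(t_0)\in\Cl B\setminus\Int B$. This is exactly what is needed, and it correctly avoids conflating the boundary of $T$ in $[0,1]$ with $\partial B$ in $X$.

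The paper gives no proof of this lemma; it only quotes it as Lemma~3 of an earlier work. So there is no argument here to compare against. Your use of the connectedness lemma stated immediately before it is the natural route.

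One small simplification: the case reduction on $a$ is unnecessary, since the hypothesis $a\notin\Int B$ already gives $0\notin T$. Only the case $b\in\partial B$ has to be split off, in order to have $1\in T$.
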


\subsection{Definition and Properties of $r$-Neighborhoods}

\begin{definition}\label{definition:one}
For any subset $A$ of a metric space $X$ and any point $p\in X$, the distance from $p$ to $A$ is the value
\begin{equation}
|p\,A|=\inf\bigl\{|p\,a|:a\in A\bigr\}.
\end{equation}
In particular, for empty $A$ we have $|p\,A|=\infty$.
\end{definition}

\begin{remark}
Let $X$ be a metric space and $A\in \mathcal{P}_0(X)$. It is well known that the mapping $x\mapsto|x\,A|$ is continuous.
\end{remark}

\begin{proposition}\label{UnCont}
Let $X$ be a metric space, $A\in \mathcal{P}_0(X)$ and $x, y\in X.$ Then
\begin{equation}
|x\, A| \le |x\, y| + |y\, A|;
\end{equation}
\begin{equation}
\bigl| |x\, A| -  |y\, A| \bigr| \le |x\, y|.
\end{equation}
\end{proposition}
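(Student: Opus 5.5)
The first inequality comes straight from the definition of $|x\,A|$ as an infimum, and the second follows by applying the first twice with the roles of $x$ and $y$ swapped. Since $A\in\mathcal{P}_0(X)$ is nonempty, both $|x\,A|$ and $|y\,A|$ are finite real numbers. Indeed, for any fixed $a_0\in A$ we have $|x\,A|\le|x\,a_0|<\infty$. This finiteness is what makes the subtraction in the second inequality meaningful, so I would state it at the outset.

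For the first inequality, fix an arbitrary $a\in A$. The triangle inequality in $X$ gives $|x\,A|\le|x\,a|\le|x\,y|+|y\,a|$. Thus $|x\,A|-|x\,y|$ is a lower bound for the set $\{|y\,a|:a\in A\}$. Taking the infimum over $a\in A$ then yields $|x\,A|-|x\,y|\le|y\,A|$, which is exactly $|x\,A|\le|x\,y|+|y\,A|$. An equivalent route is an $\varepsilon$-argument: choose $a\in A$ with $|y\,a|<|y\,A|+\varepsilon$, chain the same inequalities, and let $\varepsilon\to0$.

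For the second inequality, the first one gives $|x\,A|-|y\,A|\le|x\,y|$. Interchanging $x$ and $y$ and using the symmetry $|x\,y|=|y\,x|$ gives $|y\,A|-|x\,A|\le|x\,y|$. The two bounds together are exactly $\bigl||x\,A|-|y\,A|\bigr|\le|x\,y|$.

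There is no real obstacle here. The only point needing care is the finiteness of $|x\,A|$ and $|y\,A|$, which the nonemptiness of $A$ guarantees. As a by-product, the second inequality shows that $x\mapsto|x\,A|$ is $1$-Lipschitz, which recovers the continuity stated in the preceding remark.
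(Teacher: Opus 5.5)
Your proof is correct and follows essentially the same route as the paper: for each $z\in A$ you bound $|x\,A|$ by $|x\,y|+|y\,z|$, take the infimum over $z$, and then swap $x$ and $y$ to get the two-sided estimate. Your explicit remark that $A\neq\emptyset$ makes $|x\,A|$ and $|y\,A|$ finite, so the subtraction is legitimate, is a small point the paper leaves implicit.
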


\begin{proof}

Let $z\in A.$ Then by the triangle inequality we have $|x\, z|\le |x\, y| + |y\, z|.$
Hence, for any $z\in A$ the inequality $\inf\limits_{p\in A} |x\, p| \le |x\, y| + |y\, z|$ holds. From this, $|x\, A| \le |x\, y| + \inf\limits_{z\in A}|y\, z|.$ Therefore,
\begin{equation}
|x\, A|\le |x\, y| + |y\, A|.
\end{equation}

Further, we have
\begin{equation}
|x\, A| - |y\, A|\le |x\, y|.
\end{equation}
At the same time, swapping $x$ and $y$, we obtain
\begin{equation}
|y\, A|\le |x\, y| + |x\, A|;
\end{equation}
\begin{equation}
|y\, A| - |x\, A|\le |x\, y|.
\end{equation}
Thus,
\begin{equation}
\bigl| |x\, A| -  |y\, A| \bigr| \le |x\, y|.
\end{equation}\qed

\end{proof}

\begin{definition}
Let $A$ be a subset of a metric space $X$ and $0\le r<\infty$. The subsets
\begin{equation}
B_r(A)=\{p\in X : |p\,A|\le r\},\ \ U_r(A)=\{p\in X : |p\,A|<r\}
\end{equation}
are called, respectively, the closed and the open ball with center at $A$ and radius $r$ or the closed and the open $r$-neighborhood of $A$.
\end{definition}

\begin{remark}
According to Definition~$\ref{definition:one}$, for any $0\le r < \infty$,
\begin{equation}
B_r(\emptyset)=U_r(\emptyset)=\emptyset.
\end{equation}
\end{remark}

In the case $A = \{a\}$, the notations $B_r\bigl(\{a\}\bigr)$ and $U_r\bigl(\{a\}\bigr)$ will be replaced for brevity by $B_r(a)$ and $U_r(a)$, respectively.

\begin{lemma}[{\cite[Lemma~4]{Curves}}]\label{lem:Closed}
Let $A$ be a subset of a metric space $X$. Then for any $0\le r < \infty$, the set $B_r(A)$ is closed and $B_0(A) = \Cl A$.
\end{lemma}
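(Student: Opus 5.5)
The statement to prove is Lemma~\ref{lem:Closed}: for every $0\le r<\infty$ the set $B_r(A)$ is closed, and $B_0(A)=\Cl A$. We treat the empty set first. By the preceding remark, $B_r(\emptyset)=\emptyset$, which is closed, and $\Cl\emptyset=\emptyset$, so both claims hold. From now on we assume $A\in\mathcal{P}_0(X)$.

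For closedness, we write $B_r(A)=f^{-1}\bigl([0,r]\bigr)$, where $f(x)=|x\,A|$. By Proposition~\ref{UnCont}, $\bigl||x\,A|-|y\,A|\bigr|\le|x\,y|$, so $f$ is $1$-Lipschitz and in particular continuous. The preimage of the closed set $[0,r]$ under a continuous map is closed, hence $B_r(A)$ is closed.

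For $B_0(A)=\Cl A$ we prove both inclusions.

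First, $\Cl A\subset B_0(A)$. We have $A\subset B_0(A)$ trivially, since $|a\,A|=0$ for every $a\in A$. By the first part, $B_0(A)$ is closed, so it contains $\Cl A$.

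Second, $B_0(A)\subset\Cl A$. Take $p$ with $|p\,A|=0$. By the definition of the infimum, for every $\varepsilon>0$ there is $a\in A$ with $|p\,a|<\varepsilon$. Thus every open ball $U_\varepsilon(p)$ meets $A$, so $p\in\Cl A$.

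No step here is a genuine obstacle. The only point needing care is the separate treatment of the empty set, because the convention $|p\,\emptyset|=\infty$ means the continuity argument does not apply there.
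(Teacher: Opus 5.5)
Your proof is correct and complete. The $1$-Lipschitz bound from Proposition~\ref{UnCont} makes $x\mapsto|x\,A|$ continuous, so $B_r(A)$ is closed as a preimage of $[0,r]$. The identity $B_0(A)=\Cl A$ then follows from the infimum characterization of closure, and you rightly treat the empty set separately via the convention $|p\,\emptyset|=\infty$.

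The paper quotes this lemma from an earlier work without giving a proof. Your argument is the standard one, and it uses exactly the tools the paper sets up just before the lemma: the remark on continuity of the distance function and Proposition~\ref{UnCont}.
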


\begin{lemma}[{\cite[Corollary~6]{Curves}}]\label{lem:Opened}
For any subset $A$ of a metric space $X$ and any $0<r<\infty$, the set $U_r(A)$ is open.
\end{lemma}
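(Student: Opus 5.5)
The statement to prove is Lemma~\ref{lem:Opened}: for any $A\subset X$ and $0<r<\infty$, the set $U_r(A)$ is open. The plan is to write $U_r(A)$ as the preimage of an open set under the continuous function $f(x)=|x\,A|$, and to handle the degenerate case $A=\emptyset$ separately.

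If $A=\emptyset$, then by the remark following the definition of $r$-neighborhoods we have $U_r(\emptyset)=\emptyset$. The empty set is open, so there is nothing more to prove.

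Now let $A\in\mathcal{P}_0(X)$. Then $f(x)=|x\,A|$ is finite everywhere, since $|x\,A|\le|x\,a|$ for any $a\in A$. By Proposition~\ref{UnCont},
\begin{equation*}
\bigl||x\,A|-|y\,A|\bigr|\le|x\,y|,
\end{equation*}
so $f$ is $1$-Lipschitz and in particular continuous. We can therefore write $U_r(A)=f^{-1}\bigl((-\infty,r)\bigr)$. This is the preimage of an open subset of $\mathbb{R}$ under a continuous map, hence it is open.

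For a version that avoids the preimage argument, we can exhibit an explicit ball around each point. Take $p\in U_r(A)$ and put $\varepsilon=r-|p\,A|>0$. For any $q\in U_\varepsilon(p)$, Proposition~\ref{UnCont} gives
\begin{equation*}
|q\,A|\le|q\,p|+|p\,A|<\varepsilon+|p\,A|=r,
\end{equation*}
so $U_\varepsilon(p)\subset U_r(A)$. None of the steps is a real obstacle. The only point needing care is the empty case, where $|x\,A|=\infty$ and the continuity statement in Proposition~\ref{UnCont} does not directly apply.
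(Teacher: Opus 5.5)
Your proof is correct and complete. The empty case is handled by the remark $U_r(\emptyset)=\emptyset$. For nonempty $A$, the estimate of Proposition~\ref{UnCont} shows that $x\mapsto|x\,A|$ is $1$-Lipschitz, so the sublevel set $\{|x\,A|<r\}$ is open. That proposition comes before the lemma, so nothing is circular, and your explicit-ball version is equally valid.

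The paper gives no argument of its own here; it simply imports the result from \cite{Curves}, so there is no internal proof to compare against. Your reasoning is the same continuity argument the paper uses elsewhere, for instance in Lemma~\ref{bound}.

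An equally short alternative: $|p\,A|<r$ holds if and only if $|p\,a|<r$ for some $a\in A$. Hence $U_r(A)=\bigcup_{a\in A}U_r(a)$ is a union of open balls. This treats $A=\emptyset$ uniformly, as an empty union, and needs no Lipschitz estimate.

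Also, the hypothesis $r>0$ is never actually used, since $U_0(A)=\emptyset$.
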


\begin{lemma}[{\cite[Lemma~8]{Curves}}]\label{lem:convexity}
Let $X$ be a real normed space and $A\in \mathcal{P}_{\Conv}(X)$. Then for $0 < r<\infty$, the sets $B_r(A)$ and $U_r(A)$ are convex.
\end{lemma}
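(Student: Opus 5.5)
We have to prove Lemma~\ref{lem:convexity}: if $A$ is a nonempty convex subset of a real normed space $X$ and $0<r<\infty$, then $B_r(A)$ and $U_r(A)$ are convex. The plan is to check directly that the function $f(x)=|x\,A|$ is convex on $X$. Both neighborhoods are sublevel sets of $f$, namely $B_r(A)=\{f\le r\}$ and $U_r(A)=\{f<r\}$. Sublevel sets of a convex function, strict or non-strict, are convex, so the lemma follows at once.

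To prove convexity of $f$, take $x,y\in X$ and $t\in[0,1]$, and fix $\varepsilon>0$. Since $A$ is nonempty, the values $|x\,A|$ and $|y\,A|$ are finite, so we can choose $a,b\in A$ with
\[
|x\,a|<|x\,A|+\varepsilon,\qquad |y\,b|<|y\,A|+\varepsilon .
\]
By convexity of $A$, the point $c=ta+(1-t)b$ lies in $A$. The triangle inequality and homogeneity of the norm then give
\[
\|tx+(1-t)y-c\|\le t\|x-a\|+(1-t)\|y-b\|<t|x\,A|+(1-t)|y\,A|+\varepsilon .
\]
Hence $|tx+(1-t)y\,A|<t|x\,A|+(1-t)|y\,A|+\varepsilon$. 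Letting $\varepsilon\to0$ yields $f(tx+(1-t)y)\le tf(x)+(1-t)f(y)$.

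Finally, suppose $f(x),f(y)\le r$, or respectively $<r$. The convex combination $tf(x)+(1-t)f(y)$ is then $\le r$, or respectively $<r$. Therefore $tx+(1-t)y$ belongs to $B_r(A)$, or respectively to $U_r(A)$.

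The only delicate point is the strict case, and it causes no difficulty: the inequality $f(tx+(1-t)y)\le tf(x)+(1-t)f(y)<r$ already gives the strict bound, so no limiting argument is lost. The positivity of $r$ is not needed for convexity; it only ensures $U_r(A)\neq\emptyset$.
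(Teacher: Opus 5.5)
Your proof is correct and complete. The $\varepsilon$-approximation argument shows that $x\mapsto |x\,A|$ is convex, and $B_r(A)$ and $U_r(A)$ are exactly its non-strict and strict sublevel sets.

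The paper does not prove this lemma itself; it is quoted from Lemma~8 of the cited work. However, the convexity of the distance function to a convex set is precisely the ``well known'' fact the paper invokes in the proof of Proposition~\ref{prop:free_space}. So your route is the natural one, and it also justifies that later step.

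Your remark about $r=0$ is also right: the same argument gives convexity of $B_0(A)=\Cl A$, and $U_0(A)=\emptyset$.
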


Also we will need the following

\begin{lemma}[{\cite[Lemma~11]{Curves}}]\label{sum_1}
Let $X$ be a real normed space $X$, $A\in \mathcal{P}_0(X)$ and $r, r' \in [0, \infty)$. Then $B_r\bigl(B_{r'}(A)\bigr) = B_{r+r'}(A)$.
\end{lemma}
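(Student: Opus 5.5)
We prove the two inclusions $B_{r+r'}(A)\subseteq B_r\bigl(B_{r'}(A)\bigr)$ and $B_r\bigl(B_{r'}(A)\bigr)\subseteq B_{r+r'}(A)$ separately. The second holds in any metric space; the first uses the linear structure, namely segments.

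For $B_r\bigl(B_{r'}(A)\bigr)\subseteq B_{r+r'}(A)$, let $p$ satisfy $|p\,B_{r'}(A)|\le r$. Fix $\varepsilon>0$ and pick $q\in B_{r'}(A)$ with $|p\,q|\le r+\varepsilon$; this set is nonempty because $A\ne\emptyset$ and $A\subset B_{r'}(A)$. Proposition~\ref{UnCont} then gives
\[
|p\,A|\le |p\,q|+|q\,A|\le r+\varepsilon+r'.
\]
Letting $\varepsilon\to0$ yields $|p\,A|\le r+r'$.

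For $B_{r+r'}(A)\subseteq B_r\bigl(B_{r'}(A)\bigr)$, let $|p\,A|=s\le r+r'$. If $s\le r'$, then $p\in B_{r'}(A)$ and $|p\,B_{r'}(A)|=0\le r$. Otherwise $s>r'$; fix $\varepsilon>0$ and choose $a\in A$ with $|p\,a|\le s+\varepsilon$.

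1. Consider the segment point $q_t=a+t(p-a)$, $t\in[0,1]$, with $|q_t\,a|=t|p\,a|$ and $|p\,q_t|=(1-t)|p\,a|$.
2. Choose $t=r'/|p\,a|$, which lies in $[0,1)$ since $|p\,a|\ge s>r'$. Then $|q_t\,A|\le|q_t\,a|=r'$, so $q_t\in B_{r'}(A)$.
3. Consequently
\[
|p\,B_{r'}(A)|\le |p\,q_t|=|p\,a|-r'\le s+\varepsilon-r'\le r+\varepsilon.
\]
4. Since $\varepsilon$ is arbitrary, $|p\,B_{r'}(A)|\le r$.

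The main obstacle is exactly this second inclusion. The infimum defining $|p\,A|$ need not be attained, since $A$ is arbitrary and not closed. The $\varepsilon$-approximation handles this, and it relies on the exact additivity of distances along a segment in a normed space. This is the only place where the normed structure is used.
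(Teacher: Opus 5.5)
Your proof is correct and complete. This includes the degenerate cases $r=0$ and $r'=0$ and the case $|p\,A|\le r'$.

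The paper gives no proof of this lemma; it simply imports the statement from \cite[Lemma~11]{Curves}, so there is no in-text argument to compare against. Your argument is the standard one. You use the triangle inequality (Proposition~\ref{UnCont}) for $B_r\bigl(B_{r'}(A)\bigr)\subseteq B_{r+r'}(A)$. For the reverse inclusion, you move a distance $r'$ from an $\varepsilon$-nearest point $a\in A$ along the segment toward $p$. The $\varepsilon$ correctly handles the fact that $|p\,A|$ need not be attained when $A$ is arbitrary.

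Your write-up also isolates exactly where linearity is used. It is needed only to produce a point $q$ with $|a\,q|=r'$ and $|q\,p|=|p\,a|-r'$. So the identity holds verbatim in any geodesic space, and in any length space with one more $\varepsilon$. It fails in general metric spaces: take the two-point space $\{x,y\}$ with $|x\,y|=1$, $A=\{x\}$ and $r=r'=1/2$, where $B_{1/2}\bigl(B_{1/2}(A)\bigr)=\{x\}\neq\{x,y\}=B_1(A)$.
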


\begin{lemma}\label{bound}
Let $X$ be a metric space, $A\in \mathcal{P}_0(X)$, $p\in X$ and $0\le r < \infty$. Then if $p\in \partial B_r(A)$, then $|p\, A| = r$.
\end{lemma}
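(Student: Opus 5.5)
The plan is to get the two inequalities $|p\,A|\le r$ and $|p\,A|\ge r$ separately. The first comes from closedness of $B_r(A)$. The second comes from openness of $U_r(A)$, through a contradiction with $p$ being a boundary point. Throughout, $\partial M=\Cl M\setminus\Int M$.

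\emph{Upper bound.} By Lemma~\ref{lem:Closed}, the set $B_r(A)$ is closed. Hence $\partial B_r(A)\subset\Cl B_r(A)=B_r(A)$. So $p\in B_r(A)$, which by definition means $|p\,A|\le r$.

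\emph{Lower bound.} Suppose, to the contrary, that $|p\,A|<r$.
\begin{itemize}
\item If $r=0$, this says $|p\,A|<0$, which is impossible. So the case $r=0$ is already finished.
\item If $0<r<\infty$, then $p\in U_r(A)$. By Lemma~\ref{lem:Opened}, $U_r(A)$ is open.
\item Obviously $U_r(A)\subset B_r(A)$, so $U_r(A)$ is an open neighbourhood of $p$ lying inside $B_r(A)$. Therefore $p\in\Int B_r(A)$.
\item This contradicts $p\in\partial B_r(A)$, since boundary points are by definition not interior.
\end{itemize}
Hence $|p\,A|\ge r$. Together with the upper bound, this gives $|p\,A|=r$.

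There is no real obstacle here. The only points needing care are the degenerate case $r=0$, where Lemma~\ref{lem:Opened} does not apply, and the use of closedness of $B_r(A)$ to place boundary points inside the set. The hypothesis $A\neq\emptyset$ guarantees that $|p\,A|$ is a finite real number, so the comparisons above make sense.
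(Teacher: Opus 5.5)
Your proof is correct and is essentially the paper's argument. The paper notes that every neighbourhood of $p$ meets both $B_r(A)$ and $X\setminus B_r(A)$ and reads off $|p\,A|=r$ directly from continuity of $x\mapsto|x\,A|$. You reach the same two inequalities through the closedness of $B_r(A)$ (Lemma~\ref{lem:Closed}) and the openness of $U_r(A)$ (Lemma~\ref{lem:Opened}), which are both just consequences of that continuity; your separate handling of $r=0$, where Lemma~\ref{lem:Opened} does not apply, is correct.
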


\begin{proof}

Since $p\in\partial B_r(A)$, each neighborhood of $p$ contains points from $B_r(A)$ and $X\setminus B_r(A)$, hence $|p\,A|=r$ because the function $x\mapsto|x\,A|$ is continuous.\qed

\end{proof}

\begin{proposition}[{\cite[Proposition~12]{Curves}}]\label{as:ray}
Let $X$ be a real normed space, $A\in \mathcal{P}_{\Conv}(X)$, $x\not\in\Cl A$, and $p\in A$. Then the ray
\begin{equation}
\ell := (1-\lambda) p + \lambda x,\ \ \lambda\in [0, \infty),
\end{equation}
does not lie in any ball $B_r(A)$ for all $0\le r<\infty$.
\end{proposition}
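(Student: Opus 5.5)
We have to show that the ray $\ell$ leaves every ball $B_r(A)$. Argue by contradiction: fix $0\le r<\infty$ and suppose $\ell\subset B_r(A)$, i.e. $|\ell(\lambda)\,A|\le r$ for all $\lambda\ge 0$, where $\ell(\lambda)=(1-\lambda)p+\lambda x$. Since $x\notin \Cl A$, Lemma~\ref{lem:Closed} gives $\delta:=|x\,A|>0$. The idea is that convexity of $A$ lets us pull points near the far part of the ray back to points near $x$, and these would lie closer to $A$ than $\delta$.

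For each $\lambda>1$ pick $a_\lambda\in A$ with $|\ell(\lambda)\,a_\lambda|\le r+\varepsilon$, with $\varepsilon>0$ fixed. Since $x$ lies on the segment $[p,\ell(\lambda)]$, namely $x=(1-\tfrac1\lambda)p+\tfrac1\lambda\,\ell(\lambda)$, consider the point
\begin{equation}
b_\lambda:=\Bigl(1-\frac1\lambda\Bigr)p+\frac1\lambda\,a_\lambda .
\end{equation}
It lies in $A$ because $A$ is convex and $p,a_\lambda\in A$. Then
\begin{equation}
|x\,b_\lambda|=\frac1\lambda\,\bigl\|\ell(\lambda)-a_\lambda\bigr\|\le\frac{r+\varepsilon}{\lambda}.
\end{equation}
Therefore $\delta=|x\,A|\le (r+\varepsilon)/\lambda$ for every $\lambda>1$. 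Letting $\lambda\to\infty$ gives $\delta\le 0$, which contradicts $\delta>0$. Hence $\ell\not\subset B_r(A)$ for every $r$.

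The only step needing care is the homothety identity: the map $y\mapsto(1-\tfrac1\lambda)p+\tfrac1\lambda y$ sends $\ell(\lambda)$ to $x$, scales distances by $1/\lambda$, and maps $A$ into $A$ by convexity with $p\in A$. This is precisely where the hypothesis $p\in A$, rather than merely $p\in\Cl A$, is used, although the closure would also suffice via an approximation. The rest is routine, so no real obstacle is expected.
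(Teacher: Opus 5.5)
Your proof is correct and complete. The paper gives no proof of this statement of its own; it imports it from Proposition~12 of the cited work, so there is no in-paper argument to match directly.

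Your homothety step is, however, exactly the mechanism the paper uses elsewhere. Suppose you choose $a_\lambda$ with $|\ell(\lambda)\,a_\lambda|\le|\ell(\lambda)\,A|+\varepsilon$ instead of $\le r+\varepsilon$. Then the same computation gives the quantitative bound $|\ell(\lambda)\,A|\ge\lambda\,|x\,A|$ for $\lambda\ge 1$. The proof of Proposition~\ref{prop:free_space} obtains this same bound by combining the convexity of $y\mapsto|y\,A|$, which it quotes as well known, with Proposition~\ref{prop:Yens_more} applied with $a=p$, $b=x$, $t=\lambda$ (using $|p\,A|=0$).

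Your version proves the needed instance of that convexity from scratch, using only the convexity of $A$ and the homogeneity of the norm. It also handles the possible non-attainment of the infimum cleanly through $\varepsilon$.
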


\begin{proposition}\label{prop:Yens_more}
Let $X$ be a real linear space and $f\colon X\rightarrow \bbbr$ be a convex function. Let $a, b \in X$ and $t\ge 1.$ Then the following inequality holds for the point $x = (1-t)a + tb\colon$
\begin{equation}
f(x) \ge tf(b) - (t-1)f(a).
\end{equation}
\end{proposition}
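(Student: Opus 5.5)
The idea is to express $b$ as a convex combination of $a$ and $x$ and then apply ordinary convexity of $f$. The case $t=1$ is trivial, since then $x=b$ and the inequality reads $f(b)\ge f(b)$. So we assume $t>1$.

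From $x=(1-t)a+tb$ we solve for $b$:
\begin{equation}
b=\frac{1}{t}\,x+\Bigl(1-\frac{1}{t}\Bigr)a=\frac{1}{t}\,x+\frac{t-1}{t}\,a .
\end{equation}
Because $t>1$, the coefficients $\lambda:=1/t$ and $1-\lambda=(t-1)/t$ both lie in $(0,1)$ and sum to $1$. Hence $b$ is a genuine convex combination of $x$ and $a$ in the real linear space $X$.

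Convexity of $f$ then gives
\begin{equation}
f(b)\le \frac{1}{t}f(x)+\frac{t-1}{t}f(a).
\end{equation}
Multiplying by $t>0$ and rearranging yields $f(x)\ge tf(b)-(t-1)f(a)$, which is the claimed inequality.

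There is no real obstacle in this argument. The only points requiring care are to treat $t=1$ separately, which avoids any degenerate case (at $t=1$ the weight $1-\lambda$ would be $0$), and to check that $t>0$, so that multiplying through by $t$ preserves the direction of the inequality. The statement involves no topology or norm, so only the algebraic definition of convexity is used.
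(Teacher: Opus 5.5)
Your proposal is correct and is essentially the paper's proof: both write $b=(1-\tfrac{1}{t})a+\tfrac{1}{t}x$ as a convex combination of $a$ and $x$, apply convexity of $f$, and multiply through by $t$. Solving for $b$ directly from $x=(1-t)a+tb$ is slightly cleaner than the paper's coefficient-matching derivation of $\lambda=1-1/t$. Treating $t=1$ separately is harmless, though not needed.
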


\begin{proof}

We have 
\begin{equation}
x = a - ta + tb + b - b = b + (a-b) - t(a-b) = b + (1-t)(a-b).
\end{equation}
Also there exists $\lambda\in [0,1]$ such that $b = \lambda a + (1-\lambda) x.$ Express the quantity $\lambda$ through $t$:
\begin{equation}
\begin{aligned}
b&=\lambda a + (1-\lambda) \bigl(b + (1-t)(a-b)\bigr)=\\
  &=\lambda a + (1-\lambda) b + (1-\lambda)(1-t)(a-b)=\\
  &=\lambda a + b - \lambda b + (1 - t - \lambda + \lambda t) a - (1 - t - \lambda + \lambda t) b=\\
  &=(1-t+\lambda t)a - (-t+\lambda t)b.
\end{aligned}
\end{equation}
Thus, we obtain
\begin{equation}
(1-t+\lambda t)a - (1-t+\lambda t)b = 0;
\end{equation}
\begin{equation}
(1-t+\lambda t)(a - b) = 0.
\end{equation}
Since $a, b\in X$ are arbitrary, we get
\begin{equation}
1-t+\lambda t = 0;
\end{equation}
\begin{equation}
\lambda = 1 - \frac{1}{t}.
\end{equation}
Thus, $b = \lambda a + (1-\lambda) x = (1 - 1/t) a + (1/t) x.$ Since $f$ is a convex function and $t \ge 1$, then we have
\begin{equation}
f(b) \le (1 - 1/t) f(a) + (1/t) f(x).
\end{equation}
Hence, 
\begin{equation}
f(x) \ge tf(b) - (t-1)f(a).
\end{equation}\qed

\end{proof}

Let $X$ be a metric space and $A, B\in \mathcal{P}_0(X)$. Introduce a notation:
\begin{equation}
|A\,B| = \inf\limits_{a\in A} |a\,B|.
\end{equation}

\begin{proposition}\label{prop:free_space}
Let $A$ be a nonempty convex subset of a real normed space $X$ such that $A\neq X$, $0 < r < \infty$, $C\subset B_r(A)$ and $\bigl|C\, \partial B_r(A)\bigr| = \gamma > 0$. Then, for any $0 < \delta \le \min\{r, \gamma\}$, we have $C\subset B_{r-\delta}(A)$.
\end{proposition}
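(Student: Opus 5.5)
The plan is to argue by contradiction, pointwise. Fix $c\in C$ and suppose $f(c)>r-\delta$, where $f(y):=|y\,A|$. Since $A$ is convex, $f$ is a convex continuous function on $X$. Note that $f(c)\le r$ because $C\subset B_r(A)$, and $f(c)>r-\delta\ge 0$, so $c\notin\Cl A$ by Lemma~\ref{lem:Closed}. I will construct a point of $\partial B_r(A)$ whose distance from $c$ is less than $\delta\le\gamma$. This contradicts $\bigl|C\,\partial B_r(A)\bigr|=\gamma$.

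\emph{Construction.} Choose $\varepsilon>0$ small (fixed below) and $a\in A$ with $|c\,a|<f(c)+\varepsilon$. Put $t:=r/f(c)\ge 1$ and $x:=(1-t)a+tc$, a point on the ray from $a$ through $c$. Proposition~\ref{prop:Yens_more} (with $a$, $b=c$) and $f(a)=0$ give
\begin{equation}
f(x)\ge t f(c)=r .
\end{equation}
On the other hand, $x-c=(t-1)(c-a)$, so
\begin{equation}
|c\,x|=(t-1)|c\,a|<\frac{r-f(c)}{f(c)}\bigl(f(c)+\varepsilon\bigr)=\bigl(r-f(c)\bigr)\Bigl(1+\frac{\varepsilon}{f(c)}\Bigr).
\end{equation}
Since $r-f(c)<\delta$ strictly, we can choose $\varepsilon$ so small that the right-hand side is still $<\delta$. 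Hence $|c\,x|<\delta\le\gamma$.

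\emph{Reaching the boundary.} It remains to produce a point $q\in[c,x]$ with $q\in\partial B_r(A)$. Then $|c\,q|\le|c\,x|<\gamma$, which is the contradiction.

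If $x\notin\Int B_r(A)$, apply Lemma~\ref{light} with $B=B_r(A)$, the point $x\in X\setminus\Int B$ and the point $c\in B=\Cl B$ (closed by Lemma~\ref{lem:Closed}). This gives $q\in[x,c]\cap\partial B_r(A)$.

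If instead $x\in\Int B_r(A)$, perturb slightly. Take $x':=(1-t')a+t'c$ with $t'>t$ close to $t$. Proposition~\ref{prop:Yens_more} gives $f(x')\ge t'f(c)>r$, so $x'\notin B_r(A)$, and $|c\,x'|$ is still $<\delta$ by continuity in $t'$. Now apply Lemma~\ref{light} to $x'$ and $c$.

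Simplest is to use $x'$ from the start, choosing $t'>r/f(c)$ with $(t'-1)(f(c)+\varepsilon)<\delta$. This is possible because the strict inequality $r-f(c)<\delta$ leaves room for both $\varepsilon$ and $t'-t$.

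\emph{Main obstacle.} The main obstacle is this quantitative step: controlling $|c\,x'|$ by the gap $r-f(c)$. This is exactly where the convexity inequality of Proposition~\ref{prop:Yens_more} is essential, and where the strictness of $f(c)>r-\delta$ must be exploited carefully. The hypothesis $A\ne X$ is used only implicitly: it makes the situation nondegenerate (the rays in Proposition~\ref{as:ray} leave every ball), and it is consistent with $c\notin\Cl A$. Since $c\in C$ was arbitrary, we obtain $C\subset B_{r-\delta}(A)$.
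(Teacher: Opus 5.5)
Your proof is correct and follows essentially the same route as the paper. You take a ray from a near-nearest point of $A$ through $c$, apply the convexity estimate of Proposition~\ref{prop:Yens_more}, and use Lemma~\ref{light} to find a point of $\partial B_r(A)$ at distance about $r-|c\,A|$ from $c$. The only difference is one of organization. You use a single approximate minimizer with explicit slack and use the convexity inequality to place an explicit point $x'$ outside $B_r(A)$, so Proposition~\ref{as:ray} is not needed and the boundary point automatically lies beyond $c$. The paper instead takes a minimizing sequence $a_n$ and passes to the limit $|c\,b_n|\to r-|c\,A|$.
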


\begin{proof}

Assume the contrary, namely, let $c\in C$ and $c\notin B_{r-\delta}(A).$ Consider a sequence $\{a_n\}\subset A$ such that $|c\, a_n|\rightarrow |c\, A|.$ Draw rays $l_n$ starting from $a_n$ and passing through $c.$ Because of convexity of $A$, each ray $l_n$ does not lie in any ball $B_s(A)$ by Proposition~\ref{as:ray}. Hence, according to Lemma~\ref{light}, there exists a point $b_n\in l_n\cap \partial B_r(A)$ for each $n$. Thus, we obtain a sequence $\{b_n\}\subset \partial B_r(A)$, see Fig.~\ref{fig1}.
\begin{figure}[h]
\center{\includegraphics[scale=1.0]
{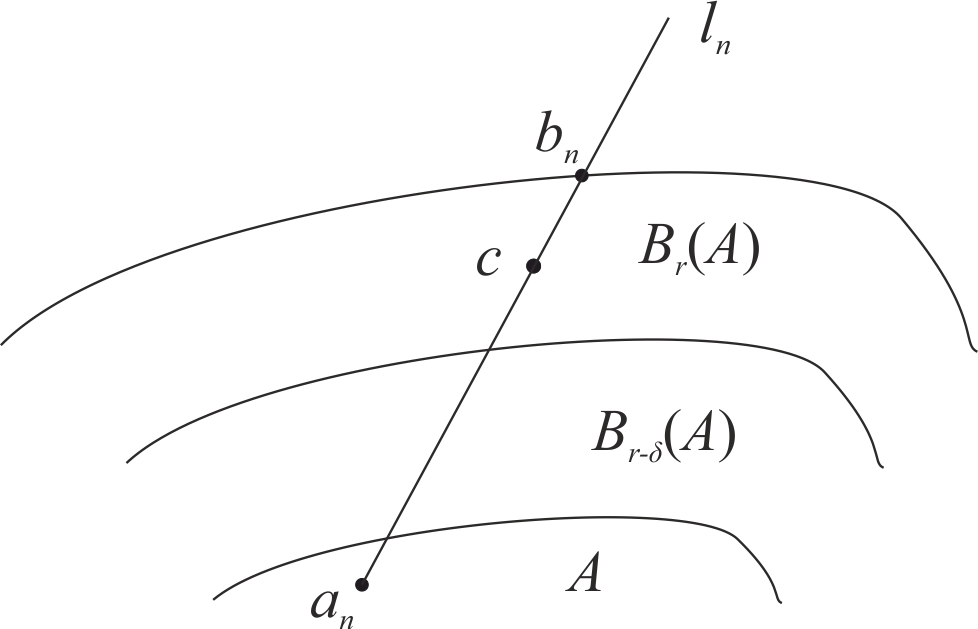}}
\caption{Sequence $\{b_n\}$.}
\label{fig1} 
\end{figure}
 
Note that for each $n$ there exists number $t_n\ge 1$ such that $b_n = (1-t_n)a_n + t_nc.$ Consider a distance from $c$ to $b_n:$
\begin{equation}\label{eq1:free_space}
\begin{aligned}
|c\, b_n|&= ||c - (1-t_n)a_n - t_nc|| = ||(1-t_n)c - (1-t_n)a_n|| = \\
&= |1 - t_n|\cdot ||c - a_n|| = (t_n - 1)\cdot |c\, a_n|.
\end{aligned}
\end{equation}

Give an upper bound for $t_n$. By Lemma~\ref{bound}, the equality 
\begin{equation}
r = |b_n\, A| = \Bigl|\bigl((1-t_n)a_n + t_nc\bigr)\,\, A\Bigr|
\end{equation}
holds, since $\{b_n\}\subset \partial B_r(A)$. The distance function to a convex set is well known to be convex. Thus, by Proposition~\ref{prop:Yens_more}, we obtain for $t_n\ge 1$ 
\begin{equation}
r = \bigl|b_n\, A\bigr| = \Bigl|\bigl((1-t_n)a_n + t_nc\bigr)\,\, A\Bigr| \ge t_n|c\, A| - (1-t_n)|a_n\, A| = t_n|c\, A|,
\end{equation}
since $\{a_n\}\subset A$. Hence, for all $n$, we have 
\begin{equation}
1\le t_n\le \frac{r}{|c\, A|}.
\end{equation}
Thus, we get from~(\ref{eq1:free_space})
\begin{equation}\label{eq2:free_space}
|c\, b_n| = (t_n - 1)\cdot |c\, a_n| \le \Bigl(\frac{r}{|c\, A|} - 1\Bigr)\cdot |c\, a_n| \rightarrow \Bigl(\frac{r}{|c\, A|} - 1\Bigr)\cdot |c\, A| = r - |c\, A|
\end{equation}
as $n\rightarrow \infty.$

On the other hand, by Proposition~\ref{UnCont}, we have $|b_n\, A| - |c\, A| \le |c\, b_n|.$ Hence, because of Lemma~\ref{bound}, the inequality $|c\, b_n|\ge r - |c\, A|$ holds for all $n$. Thus, by~(\ref{eq2:free_space}), we obtain 
\begin{equation}
|c\, b_n| \rightarrow r - |c\, A|
\end{equation}
as $n\rightarrow \infty.$ However, by the condition $\bigl|C\, \partial B_r(A)\bigr| = \gamma.$ It means that $\gamma\le |c\, b_n| \rightarrow r - |c\, A|$, that is 
\begin{equation}
|c\, A| + \gamma\le |c\, b_n| + |c\, A| \rightarrow r
\end{equation}
as $n\rightarrow \infty.$ So, we get
\begin{equation}
|c\, A| = |c\, b_n| + |c\, A| - |c\, b_n| \le |c\, b_n| + |c\, A| - \gamma \rightarrow r - \gamma \le r - \delta
\end{equation}
as $n\rightarrow \infty.$ Hence, $c\in B_{r-\delta}(A).$ This is a contradiction, so the proposition is proved.\qed

\end{proof}

\subsection{Hausdorff Distance}

\begin{definition}\label{distance_H}
Given a metric space $X$, we endow $\mathcal{P}_0(X)$ with the Hausdorff distance $d_H$ defined as follows$\colon$ for any $A,B\in\mathcal{P}_0(X)$,  we put
\begin{equation}
d_H(A, B) = \inf\bigl\{r:A\subset B_r(B),\,B\subset B_r(A)\bigr\}.
\end{equation}
\end{definition}

It is also well known that there is an equivalent definition of the Hausdorff distance (see, for example, the work~\cite[Proposition~5.1]{TuzLections}).

\begin{definition}\label{distance_H2}
\begin{equation}
d_H(A, B) = \max\bigl\{\sup\limits_{a\in A} |a\, B|,\, \sup\limits_{b\in B} |b\, A|\bigr\}.
\end{equation}
\end{definition}

Notice that $d_H$ is non-negative, symmetric, and satisfies the triangle inequality, but can be zero for $A\ne B$ and can be infinite.

\begin{lemma}\label{lem:equal}
Let $X$ be a metric spaces and $A, B\in \mathcal{P}_0(X).$ If $|A\, B| = d_H(A, B)$ then 
\begin{equation}
\sup\limits_{a\in A} |a\, B| = \sup\limits_{b\in B} |b\,  A|.
\end{equation}
\end{lemma}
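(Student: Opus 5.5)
The plan is to squeeze all three quantities between the same two numbers. We have $|A\,B|=\inf_{a\in A}|a\,B|\le \sup_{a\in A}|a\,B|\le d_H(A,B)$, where the last inequality is Definition~\ref{distance_H2}. The hypothesis $|A\,B|=d_H(A,B)$ then forces $\sup_{a\in A}|a\,B|=d_H(A,B)$, and the same should happen for $\sup_{b\in B}|b\,A|$ once we control it from below.

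The first step is the symmetry of the quantity $|A\,B|$:
\begin{equation*}
\inf_{a\in A}|a\,B|=\inf_{a\in A}\inf_{b\in B}|a\,b|=\inf_{b\in B}\inf_{a\in A}|a\,b|=\inf_{b\in B}|b\,A|=|B\,A|.
\end{equation*}
This holds because both sides are the infimum of $|a\,b|$ over all $(a,b)\in A\times B$, and iterated infima can be exchanged.

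The second step is to write the chain for $B$:
\begin{equation*}
|B\,A|=\inf_{b\in B}|b\,A|\le\sup_{b\in B}|b\,A|\le d_H(A,B).
\end{equation*}
Since $|B\,A|=|A\,B|=d_H(A,B)$, this chain collapses as well, so $\sup_{b\in B}|b\,A|=d_H(A,B)$. Together with the first collapsed chain, this gives the claimed equality $\sup_{a\in A}|a\,B|=\sup_{b\in B}|b\,A|$.

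The only point needing care is the infinite case. If $d_H(A,B)=\infty$, the hypothesis gives $|A\,B|=\infty$. The chains are then read in $[0,\infty]$, and they still collapse, so both suprema equal $\infty$. Nonemptiness of $A$ and $B$ ensures that the infima are at most the suprema. There is no genuine obstacle here; the only subtle point is remembering the exchange of infima in the first step.
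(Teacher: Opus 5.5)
Your proof is correct and is essentially the paper's argument: both bound each one-sided supremum below by $|A\,B|$ (using the exchange of iterated infima for the $B$-side) and above by $d_H(A,B)=\max\bigl\{\sup_{a\in A}|a\,B|,\,\sup_{b\in B}|b\,A|\bigr\}$, so the hypothesis forces equality. Incidentally, your infinite case is vacuous, since $|A\,B|<\infty$ whenever $A$ and $B$ are nonempty subsets of a metric space.
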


\begin{proof}

We have $\inf\limits_{a\in A} |a\,B| = |A\,B| = d_H(A, B) = \max\bigl\{\sup\limits_{a\in A} |a\, B|,\, \sup\limits_{b\in B} |b\, A|\bigr\}.$ But
\begin{equation}
\inf\limits_{a\in A} |a\,B| = \inf\limits_{a\in A}\inf\limits_{b\in B} |a\,b| \le \sup\limits_{a\in A}\inf\limits_{b\in B} |a\,b| = \sup\limits_{a\in A} |a\, B|
\end{equation}
and
\begin{equation}
\inf\limits_{a\in A} |a\,B| = \inf\limits_{b\in B}\inf\limits_{a\in A} |a\,b| \le \sup\limits_{b\in B}\inf\limits_{a\in A} |a\,b| = \sup\limits_{b\in B} |b\, A|.
\end{equation}
Hence, $|A\, B| = \sup\limits_{a\in A} |a\, B| = \sup\limits_{b\in B} |b\,  A|.$\qed

\end{proof}

Introduce a notation. For any $A, B\in\mathcal{P}_0(X)$ and any $r\in\bigl(|A\,B|,\infty\bigr)\subset \bbbr$, we put $F_r(A, B)=B_r(A)\cap B$. Note that it holds $F_r(A, B)\in \mathcal{P}_0(X)$.  

In the next theorem, continuity of $F_r(A, B)$ in $r$ is understood with respect to $d_H$.

\begin{theorem}[{\cite[Theorem~20]{Curves}}]\label{cont_criterion}
Let $X$ be a real normed space, and $A, B\in\mathcal{P}_{\Conv}(X)$. Then the mapping $F_r(A,B)$ is continuous in $r$ on some interval $(Q,T)\subset\bigl(|A\,B|,\infty\bigr)$ if and only if for any $s_1,s_2\in(Q,T)$, it holds
\begin{equation}
d_H\bigl(F_{s_1}(A, B),F_{s_2}(A,B)\bigr)<\infty.
\end{equation}
\end{theorem}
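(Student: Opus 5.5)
The theorem characterizes continuity of $r\mapsto F_r(A,B)=B_r(A)\cap B$ on an interval $(Q,T)$. The plan is to handle the two implications separately.

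\emph{Necessity.} Assume $F_r(A,B)$ is continuous in $r$ on $(Q,T)$ with respect to $d_H$, and take $s_1<s_2$ in $(Q,T)$. Continuity at each point of the compact segment $[s_1,s_2]\subset(Q,T)$ gives, for every $s$, a neighbourhood on which $d_H\bigl(F_s,F_{s'}\bigr)<1$. Extract a finite subcover and chain the finitely many finite distances together with the triangle inequality for $d_H$. This yields $d_H\bigl(F_{s_1},F_{s_2}\bigr)<\infty$.

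\emph{Sufficiency.} Assume all pairwise distances are finite, and fix $s_1<s_2$ in $(Q,T)$. Since $F_{s_1}\subset F_{s_2}$, we have $d_H(F_{s_1},F_{s_2})=\sup_{x\in F_{s_2}}|x\,F_{s_1}|$, so it suffices to show that this supremum tends to $0$ as $s_2-s_1\to0$ from either side of a fixed $r$. The key step is a convexity estimate. Choose $s_0\in(Q,s_1)$ with $s_0>|A\,B|$. By hypothesis $D:=d_H(F_{s_0},F_{s_2})<\infty$. For $x\in F_{s_2}$, pick $y\in F_{s_0}$ with $|x\,y|\le D+\varepsilon$, and consider the points $z_\lambda=(1-\lambda)x+\lambda y$, which lie in $B$ because $B$ is convex. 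The function $|\cdot\,A|$ is convex, so
\[
|z_\lambda\,A|\le(1-\lambda)s_2+\lambda s_0 .
\]
Taking $\lambda=(s_2-s_1)/(s_2-s_0)$ gives $|z_\lambda\,A|\le s_1$, hence $z_\lambda\in F_{s_1}$. Also
\[
|x\,z_\lambda|=\lambda|x\,y|\le\frac{s_2-s_1}{s_2-s_0}(D+\varepsilon).
\]
Therefore
\[
d_H(F_{s_1},F_{s_2})\le\frac{s_2-s_1}{s_2-s_0}\,d_H(F_{s_0},F_{s_2}).
\]

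The main obstacle is keeping $D$ uniformly bounded while $s_1,s_2$ vary near a fixed $r$. I would fix $s_0<r$ and a reference level $s_3\in(r,T)$. For $s_2\le s_3$, the inclusions $F_{s_0}\subset F_{s_2}\subset F_{s_3}$ give $d_H(F_{s_0},F_{s_2})\le d_H(F_{s_0},F_{s_3})<\infty$. This bound holds uniformly, so the right-hand side above is at most a constant times $(s_2-s_1)$, which tends to $0$. That gives continuity at $r$, in fact local Lipschitz continuity.

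If $Q=|A\,B|$, a valid $s_0\in(|A\,B|,s_1)$ still exists because every $s_1$ in the open interval exceeds $Q$. The sets are nonempty since $s>|A\,B|$, so the constructions above are well defined.
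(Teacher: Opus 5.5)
Your proof is correct. The paper does not prove this statement itself; it quotes it from \cite[Theorem~20]{Curves}, so there is no in-paper argument to match. On its own terms, your proposal is a complete, self-contained proof.

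\emph{Necessity.} The finite-subcover argument on $[s_1,s_2]$ works. It only uses the triangle inequality for $d_H$, which holds for arbitrary nonempty sets with values in $[0,\infty]$, so you never need $F_r(A,B)$ to be closed.

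\emph{Sufficiency.} The interpolation step is the heart of the matter and it is sound. For $x\in F_{s_2}$ and $y\in F_{s_0}$, the point $z_\lambda$ stays in $B$ because $B$ is convex. Convexity of $|\cdot\,A|$ then places $z_\lambda$ in $B_{s_1}(A)$, which gives
\[
d_H(F_{s_1},F_{s_2})\le \frac{s_2-s_1}{s_2-s_0}\,d_H(F_{s_0},F_{s_2}).
\]
You leave one point implicit: the factor $1/(s_2-s_0)$ must stay bounded. This is automatic. When you compare $F_r$ with $F_{r'}$ for $r'\in(s_0,s_3]$, the larger index is at least $r$, so $s_2-s_0\ge r-s_0$. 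Hence
\[
d_H(F_r,F_{r'})\le \frac{|r-r'|}{r-s_0}\,d_H(F_{s_0},F_{s_3}).
\]

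For comparison, the tools the paper does display for related questions are in Proposition~\ref{prop:free_space}. There it shoots rays from points of the convex set through a given point and finds boundary points of $B_r(A)$ via Lemma~\ref{light}. It also relies on the extrapolation inequality of Proposition~\ref{prop:Yens_more}. Your route needs only the interpolation form of convexity and never involves rays or boundaries. It also gives more than the statement asks: an explicit local Lipschitz bound for $r\mapsto F_r(A,B)$ on $(Q,T)$, whose constant is controlled by a single finite distance $d_H(F_{s_0},F_{s_3})$ and by how far $r$ lies from $s_0$.
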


\begin{theorem}[{\cite[Theorem~21]{Curves}}]\label{thm:2_dim}
Let $X$ be a real normed space, dimension of $X$ does not exceed $2$, and $A, B \in\mathcal{P}_{\Conv}(X)$. Then for any $r,\,s\in\bigl(|A\,B|,\infty\bigr)$, it holds
\begin{equation}
d_H\bigl(F_r(A,B),F_s(A,B)\bigr)<\infty.
\end{equation}
\end{theorem}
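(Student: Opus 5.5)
The statement is Theorem~\ref{thm:2_dim}: for convex $A,B$ in a normed space $X$ with $\dim X\le 2$, and any $r,s\in\bigl(|A\,B|,\infty\bigr)$, we must show $d_H\bigl(F_r(A,B),F_s(A,B)\bigr)<\infty$.

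\emph{Reduction to one direction.} Assume $r<s$. Then $F_r(A,B)\subset F_s(A,B)$, so $\sup_{x\in F_r}|x\,F_s|=0$. It remains to bound $\sup_{y\in F_s}|y\,F_r|$.

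\emph{Low dimensions.} If $\dim X\le 1$, everything lives in a line. Then $A,B$ and $B_r(A)$ are intervals, and $F_r\subset F_s$ are intervals whose endpoints differ by at most $s-r$ when they are finite. If an endpoint is infinite, both intervals contain the same unbounded tail, since $B_r(A)$ and $B_s(A)$ are unbounded on the same sides. Hence the distance is at most $s-r$. So assume $\dim X=2$.

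\emph{Plan in dimension two.} Fix a point $b_0\in F_{r'}(A,B)$ with $|A\,B|<r'<r$. Take $y\in F_s(A,B)$, and choose $a\in A$ with $|y\,a|\le s+\varepsilon$.

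\emph{The segment $[b_0,y]\subset B$.} Consider points $z_\lambda=(1-\lambda)b_0+\lambda y$. By convexity of the distance function,
\[
|z_\lambda\,A|\le (1-\lambda)r'+\lambda s .
\]
So $z_\lambda\in F_r(A,B)$ as soon as $\lambda\le (r-r')/(s-r')=:\lambda_0$. This gives
\[
|y\,F_r|\le (1-\lambda_0)|y\,b_0| .
\]
That bound is finite for each $y$ but not uniform, because $|y\,b_0|$ may be unbounded.

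\emph{Main obstacle: uniformity for unbounded $B$.} This is where two-dimensionality must enter. In higher dimensions the statement genuinely fails, which is consistent with Theorem~\ref{cont_criterion} distinguishing continuity. I would use the structure of unbounded convex sets in the plane through recession cones. Let $\operatorname{rec}$ denote the recession cone, and let $C=\operatorname{rec}(\Cl B)$ and $D=\operatorname{rec}(\Cl A)$, which is also the recession cone of every $B_t(A)$.

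Far points $y\in F_s$ escape along directions in $C$ while staying within $s$ of $A$. In the plane, a closed convex cone is a ray, a line, a sector, a half-plane or the whole plane. Hence the set of directions $v\in C$ along which $|\,\cdot\,A|$ stays bounded is either contained in $C\cap D$ or is a boundary ray. I would split into cases:

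\begin{enumerate}
\item[(i)] $y$ moves in directions of $C\cap D$. Translating a fixed point of $F_r$ along such directions stays in $F_r$, because both $B$ and $B_r(A)$ are invariant under adding elements of their recession cones. This yields a uniform bound.
\item[(ii)] $y$ moves along a direction where $A$ and $B$ are asymptotically parallel to a common line (strips and half-planes). There one computes directly: the cross-section of $F_t$ along that line varies Lipschitz-continuously in $t$ with constant depending on the angle between the boundary rays.
\end{enumerate}

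A compactness argument on the unit circle of directions (which is one-dimensional, so there are only finitely many boundary directions) glues the cases. I expect case (ii), with asymptotic boundary lines meeting at zero angle, to be the delicate step. I would handle it by replacing $b_0$ with a sequence of points sliding along the common asymptotic direction, and applying the segment estimate above locally.
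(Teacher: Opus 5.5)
The paper gives no proof of this statement; it quotes it from earlier work. So I judge your proposal on its own terms. Several parts are correct: the reduction to bounding $\sup_{y\in F_s}|y\,F_r|$, the case $\dim X\le 1$, and the segment estimate $|y\,F_r|\le(1-\lambda_0)|y\,b_0|$. However, the whole content of the theorem is uniformity in $y$. There you have only a plan, and its main step is wrong.

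\textbf{Case (i) does not give a uniform bound.} You assume that if $y_n$ escapes in a direction $u\in C\cap D$, then $y_n$ stays within bounded distance of $b_0+(C\cap D)$. This is false. Write $y_n-q=t_nu+h_ne$; the transverse offset $h_n$ can tend to infinity sublinearly. For example, take the Euclidean plane, $B=X$ and $A=\{(\xi,\eta)\colon \xi\ge \eta^2\}$. Then $F_s=B_s(A)$ contains the points $(\eta^2,\eta)$, whose distance to any translate of the recession ray is of order $|\eta|$. Here the theorem holds trivially, but not by your argument. In general, translating a fixed point only yields $|y_n\,F_r|\le h_n\|e\|$, which is unbounded.

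\textbf{The split into (i) and (ii) separates nothing.} If $|\cdot\,A|$ stays bounded along a ray in direction $v$, then $v\in D$. Hence every escape direction of $F_s$ already lies in $\operatorname{rec}F_s=C\cap D$. Case (ii) and the ``compactness gluing'' are asserted, not argued.

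\textbf{Sliding $b_0$ is circular.} Moving $b_0$ to $q+t_nu$ and reusing the segment estimate gives $(1-\lambda_0)h_n\|e\|$ again. To make it uniform you would need points of $F_{r'}$ at bounded distance from $y_n$, which is the statement being proved.

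\textbf{Missing idea: a separation argument, which is where $\dim X\le 2$ enters.} Here is one way to close the gap.
First, assume $A$ and $B$ are closed. This is harmless, since in finite dimensions $\Cl F_t(A,B)=B_t(A)\cap \Cl B$ for $t>|A\,B|$.
Second, suppose $y_n\in F_s$ and $\rho_n:=|y_n\,F_r|\to\infty$. Fix $q\in F_{r_0}$ with $|A\,B|<r_0<r$, and pass to a subsequence with $(y_n-q)/\|y_n-q\|\to u$.
Third, choose $a_n\in A$ with $\|a_n-y_n\|\le s$. This gives $u\in\operatorname{rec}A\cap\operatorname{rec}B$.
Fourth, let $e$ be a fixed vector completing $u$ to a basis, with its sign chosen so that $h_n>0$. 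Then $w_n:=q+t_nu\in F_{r_0}$, so $\rho_n\le\|y_n-w_n\|=h_n\|e\|$. Also, $B$ contains both the segment $[w_n,y_n]$ and the ray $\{y_n+\beta u\colon \beta\ge 0\}$.
Fifth, separate the disjoint convex sets $B\cap U_{\rho_n}(y_n)$ and $B_r(A)$ by a functional $\nu_n$ with $\|\nu_n\|_*=1$. Then $\nu_n\le c_n$ on the first set and $\nu_n\ge c_n+r$ on $A$.

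From this one reads off the following.
\begin{itemize}
\item $\nu_n(y_n)\ge c_n-(s-r)$.
\item $\nu_n(u)\ge 0$, since $u\in\operatorname{rec}A$.
\item The ray gives $\nu_n(u)\le (s-r)/\rho_n$.
\item The segment gives $\nu_n(e)\ge -(s-r)\|e\|/\rho_n$.
\item Since $B_{r-r_0}(w_n)\subset B_r(A)$, we get $h_n\nu_n(e)=\nu_n(y_n)-\nu_n(w_n)\le -(r-r_0)<0$.
\end{itemize}
Hence $\nu_n(u)\to 0$ and $\nu_n(e)\to 0$. Since $u$ and $e$ span $X$, this contradicts $\|\nu_n\|_*=1$.

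In dimension three, the transverse direction $e_n=(y_n-w_n)/h_n$ can rotate with $n$, and $\nu_n$ can concentrate on the remaining direction. This is why the argument, and the theorem, stop at dimension two.
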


The following result is well known, see, e.g.,~\cite[Proposition~7.3.3]{Burago}.

\begin{proposition}\label{prop:metr}
Let $X$ be a metric space. Then
\begin{itemize}
\item $d_H\bigl(A, \Cl(A)\bigr) = 0$ for any $A\in \mathcal{P}_0(X);$
\item If $A$ and $B$ are closed subsets of the space $X$ and $d_H(A, B) = 0$, then $A = B.$
\end{itemize}
\end{proposition}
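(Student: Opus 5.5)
We prove the two items of Proposition~\ref{prop:metr} in turn, working with Definition~\ref{distance_H2} throughout.

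For the first item, fix $A\in\mathcal{P}_0(X)$. Since $A\subset\Cl A$, every $a\in A$ satisfies $|a\,\Cl A|=0$, so $\sup_{a\in A}|a\,\Cl A|=0$. For the other one-sided term, take $x\in\Cl A$. Then $|x\,A|=0$: by Lemma~\ref{lem:Closed}, $\Cl A=B_0(A)=\{p\colon |p\,A|\le 0\}$. Equivalently, there are $a_n\in A$ with $a_n\to x$, and then $|x\,A|\le|x\,a_n|\to0$. Hence $\sup_{x\in\Cl A}|x\,A|=0$, and therefore $d_H(A,\Cl A)=\max\{0,0\}=0$.

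For the second item, suppose $A,B$ are closed and $d_H(A,B)=0$. Both suprema in Definition~\ref{distance_H2} are nonnegative and their maximum is $0$, so both vanish. In particular $|a\,B|=0$ for every $a\in A$. Thus $a\in B_0(B)$, and by Lemma~\ref{lem:Closed} this set equals $\Cl B=B$. This gives $A\subset B$. The symmetric argument, using $\sup_{b\in B}|b\,A|=0$, gives $B\subset A$, and so $A=B$.

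No step here is a real obstacle. The only point needing care is the identification $\{p\colon |p\,A|=0\}=\Cl A$, which is exactly Lemma~\ref{lem:Closed} with $r=0$. One should also note that the definition involves no infinite values here, since all the suprema involved equal $0$.
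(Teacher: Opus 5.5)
Your proof is correct: both items follow exactly as you argue from Definition~\ref{distance_H2} and the identity $B_0(A)=\Cl A$ of Lemma~\ref{lem:Closed}. The paper gives no proof of its own and only cites Burago--Burago--Ivanov (Proposition~7.3.3); your argument is essentially the standard one, which is usually phrased through $r$-neighborhoods ($\Cl A\subset U_r(A)$ and $A\subset U_r(B)$ for every $r>0$) rather than one-sided suprema, a purely cosmetic difference.
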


\begin{lemma}\label{lem:inter}
Let $X$ be a metric space and $A, B, C, D\in \mathcal{P}_0(X),$ with $A\subset B\subset C$ and $d_H(A, D) = d_H(C, D) = \alpha$. Then $d_H(B, D)\le \alpha$.
\end{lemma}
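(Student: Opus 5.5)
We use Definition~\ref{distance_H2} and bound the two one-sided quantities $\sup_{b\in B}|b\,D|$ and $\sup_{d\in D}|d\,B|$ separately by $\alpha$. Each bound follows from monotonicity of distance-to-set under inclusion, using $B\subset C$ for the first and $A\subset B$ for the second.

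For the first term, take any $b\in B$. Since $B\subset C$, we have $b\in C$, so $|b\,D|\le\sup_{c\in C}|c\,D|\le d_H(C,D)=\alpha$. Taking the supremum over $b\in B$ gives $\sup_{b\in B}|b\,D|\le\alpha$.

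For the second term, take any $x\in D$. Since $A\subset B$, the infimum defining $|x\,B|$ runs over a larger set than the one defining $|x\,A|$, so $|x\,B|\le|x\,A|$. Moreover $|x\,A|\le\sup_{y\in D}|y\,A|\le d_H(A,D)=\alpha$. Taking the supremum over $x\in D$ gives $\sup_{x\in D}|x\,B|\le\alpha$.

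Combining the two bounds in the maximum of Definition~\ref{distance_H2} yields $d_H(B,D)\le\alpha$. If $\alpha=\infty$ the statement is trivial. There is no real obstacle: the only care needed is to match each one-sided term with the correct inclusion, the outer set $C$ for points of $B$ and the inner set $A$ for distances from points of $D$.
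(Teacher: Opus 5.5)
Your proposal is correct and follows essentially the same route as the paper. Like the paper, you bound $\sup_{b\in B}|b\,D|$ via $B\subset C$ and $\sup_{x\in D}|x\,B|$ via $A\subset B$, then combine them through Definition~\ref{distance_H2}.
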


\begin{proof}

From $d_H(C, D) = \alpha$ and $B\subset C$ it follows that $\sup\limits_{b\in B} |b\, D|\le \sup\limits_{c\in C} |c\, D|\le \alpha.$ From $d_H(A, D) = \alpha$ and $A\subset B$ it follows that $\sup\limits_{d\in D} |d\, B|\le \sup\limits_{d\in D} |d\, A|\le \alpha.$ Hence $d_H(B, D)\le \alpha$.\qed

\end{proof}

\section{Minimal Parametric Network Problem}\label{sec:tasks}
\subsection{Minimal Networks}\label{chap:min_net}

In this subsection, we recall the necessary concepts from graph theory and fix the corresponding notation. More detailed information on graph theory can be found, for example, in~\cite{Graphs}.

\begin{definition}
A simple graph is a pair $(V, E)$ consisting of a finite set $V$ and a set $E$ of two-element subsets $\{u, v\} \subset V$. The elements of $V$ are called vertices of the graph, and the elements of $E$ are called edges of the graph. If $e = \{u, v\} \in E$, then the vertices $u$ and $v$ are said to be neighbors or adjacent and are joined by the edge $e$.
\end{definition}

Since we will only consider simple graphs henceforth, the word ``simple'' will be omitted.

Throughout, we will only consider undirected graphs, so we look at each edge $\{u, v\}\in E$ as an unordered pair of vertices, that is, we assume that $\{u, v\}$ and $\{v, u\}$ are the same edge.

We also emphasize that each edge is precisely a two-element set, that is, if $\{u, v\}$ is an edge of a graph, then $u\neq v.$

\begin{definition}
The number of vertices adjacent to a given vertex $u$ is called the degree of the vertex $u$ and is denoted by $\deg(u)$.
\end{definition}

\begin{definition}
A sequence of edges $\{v_1, v_2\}, \{v_2, v_3\}, \{v_3, v_4\},\ldots , \{v_n, v_{n+1}\}$ of a graph is called a route connecting the vertices $v_1$ and $v_{n+1}$, or a $v_1$-$v_{n+1}$ route.
\end{definition}

\begin{definition}
A graph is called connected if any two distinct vertices can be connected by a route.
\end{definition}

\begin{definition}
Let $G = (V, E)$ be a connected graph, and let $\mathcal{A} \subset V$. Then $G$ is said to join $\mathcal{A}$. The vertices in $\mathcal{A}$ are called boundary vertices, the set $\mathcal{A}$ itself is called the boundary of the graph, and the vertices in $V \setminus \mathcal{A}$ are called interior vertices.
\end{definition}

Thus, the boundary of a graph is some distinguished and fixed subset of its vertices. We will henceforth denote the boundary of the graph $G$ by $\partial G$.

\begin{definition}
Let $X$ be an arbitrary set. A function $d\colon X\times X\rightarrow [0, \infty]$ is called an extended metric if for any $x, y, z\in X$ the following conditions hold:
\begin{itemize}
\item[(1)] Non-negativity and non-degeneracy: $d(x, y) \ge 0,$ and $d(x, y) = 0$ if and only if $x = y.$
\item[(2)] Symmetry: $d(x, y) = d(y, x).$
\item[(3)] Triangle inequality: $d(x, z)\le d(x, y) + d(y, z).$
\end{itemize}
In this case, the set $X$ equipped with such a function $d$ is called an extended metric space.
\end{definition}

We emphasize that, in our understanding, any metric space is an extended metric space.

As in the case of a metric space, the distance between two points $a, b$ of an extended metric space $X$ will be denoted by $|a\,b|$ instead of $d(a, b)$.

\begin{definition} 
Let $G = (V, E)$ be a connected graph. A network of the type $G$ in an extended metric space $X$ is an arbitrary mapping $g\colon V\rightarrow X$. The graph $G$ is also called the parameterizing graph of $g$.
\end{definition}

Note that the mapping $g$ induces a corresponding mapping on the edges of the graph $G$ by the rule $g_e\colon \{v_i, v_j\} \mapsto \bigl\{g(v_i), g(v_j)\bigr\}$.

\begin{definition}\label{dfn:network_vert}
The elements of $g(V)$ are called vertices of the network, and the elements of $g_e(E)$ are called edges of the network. The elements of $g(\partial G)$ are called boundary vertices of the network, and the elements of $g(V\setminus \partial G)$ are called interior vertices of the network. If $u, v\in V$ were adjacent vertices of the graph $G$ then we call $g(u), g(v)$ adjacent vertices of the network $g$.
\end{definition}

\begin{definition}
An edge $\bigl\{g(v_i), g(v_j)\bigr\}$ of the network $g$ is called degenerate if $g(v_i) = g(v_j);$ otherwise, the edge is called non-degenerate. A network all of whose edges are non-degenerate will also be called non-degenerate.
\end{definition}

The length of an edge $\bigl\{g(v_i), g(v_j)\bigr\}$ of the network $g$ is defined as the distance $\bigl|g(v_i)\, g(v_j)\bigr|$, and the length of the network $g$ is defined as the sum of the lengths of all its edges:
\begin{equation}
|g| := \sum\limits_{\{v_i, v_j\}\in E} \bigl|g(v_i)\, g(v_j)\bigr|.
\end{equation}

Note that since $X$ is an extended metric space, the length of an edge, and hence the length of a network in $X$, may be equal to $\infty$.

Let a boundary $\partial G$ be a subset of an extended metric space $X$. We shall assume that in this case all networks $g$ satisfy the following condition: \textbf{the restriction of {\boldmath $g$} to {\boldmath $\partial G$} is the identity mapping}. Thus, each such network $g$ is uniquely determined by the images of the interior vertices of the parametrizing graph $G$.

\begin{definition}
Graphs $G = (V, E)$ and $H = (W, F)$ are called isomorphic if there exists a bijective map $\phi\colon V\rightarrow W$ such that $\{u, v\}\in E$ if and only if $\bigl\{\phi(u), \phi(v)\bigr\}\in F$. Such a mapping $\phi$ is called an isomorphism of the graphs $G$ and $H$.
\end{definition}

\begin{definition}
Let a connected graph $G_1 = (V_1, E_1)$ parameterizing a network $g_1$ and a connected graph $G_2 = (V_2, E_2)$ parameterizing a network $g_2$ be given and $\mathcal{A} = \partial G_1 = \partial G_2$. These networks are said to be of the same type $($parameter$)$, say $G_1$, if there exists an isomorphism $\phi\colon V_1\rightarrow V_2$ between the graphs $G_1$ and $G_2$ which is the identity on $\mathcal{A}$.
\end{definition}

For a fixed boundary $\mathcal{A}$ in an extended metric space $X$, consider a connected graph $G = (V, E)$ joining $\mathcal{A}$, that is $\partial G = \mathcal{A}\subset X$. Denote by $[G, \mathcal{A}]$ the set of all networks of this type $G$ in the extended metric space $X$. By the convention made above, all networks $g\in [G, \mathcal{A}]$ satisfy the condition $g\big|_{\partial G = \mathcal{A}} = \text{id}$. 

\begin{definition}\label{dfn:mpn}
A network from $[G, \mathcal{A}]$ that has the smallest possible length among all networks in $[G, \mathcal{A}]$ is called a minimal parametric network of type $G$ joining $\mathcal{A}$.
\end{definition}

A minimal parametric network does not always exist. Nevertheless, the infimum of the quantities $|g|$ over all networks $g\in [G, \mathcal{A}]$ always exists. 

\begin{definition}
The quantity 
\begin{equation}
\mpn[G, \mathcal{A}] = \inf\limits_{g\in [G, \mathcal{A}]} |g|
\end{equation}
is called the length of the minimal parametric network of type $G$ joining $\mathcal{A}$.
\end{definition}

\begin{definition}
The length of the minimal Steiner tree joining a subset $\mathcal{A}$ is the value
\begin{equation}
\smt(\mathcal{A}) = \inf\limits_{G\colon \partial G = \mathcal{A}} \mpn[G, \mathcal{A}].
\end{equation}
\end{definition}

\begin{proposition}\label{prop:nondeg}
Let $X$ be an extended metric space and $\mathcal{A}\subset X$. If there exist a graph $G$ joining $\mathcal{A}$ and a network $g\in [G, \mathcal{A}]$ such that $|g| = \smt(\mathcal{A})$, then there also exist a graph $G'$ joining $\mathcal{A}$ and a non-degenerate network $g'\in [G', \mathcal{A}]$ such that $|g'| = \smt(\mathcal{A})$.
\end{proposition}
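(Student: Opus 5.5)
The idea is to collapse degenerate edges by contracting them in the parameterizing graph. Since $g$ is fixed on $\partial G=\mathcal{A}$ and degenerate edges contribute zero length, contraction should not change the length. Because $G$ is finite, we can argue by induction on the number of degenerate edges of $g$. Alternatively, we can choose, among all pairs $(G,g)$ with $|g|=\smt(\mathcal{A})$, one whose graph has the fewest vertices, and show that it is non-degenerate.

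The reduction step goes as follows. Let $e=\{u,v\}$ be a degenerate edge of $g$, so $g(u)=g(v)$.

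If both $u,v\in\partial G$, then $u=g(u)=g(v)=v$, since $g$ is the identity on $\mathcal{A}$. This contradicts the fact that edges are two-element sets. Hence at least one endpoint, say $v$, is interior.

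Define $G'=(V',E')$ with $V'=V\setminus\{v\}$. The set $E'$ consists of all edges of $G$ not containing $v$, together with the edges $\{u,w\}$ for each neighbor $w\neq u$ of $v$. Duplicates are merged, so $G'$ stays simple. Set $\partial G'=\mathcal{A}$ and $g'=g|_{V'}$; this is legitimate because $v\notin\mathcal{A}$. Then $G'$ is connected, since any route through $v$ can be rerouted through $u$. So $G'$ joins $\mathcal{A}$ and $g'\in[G',\mathcal{A}]$.

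Next I would compare lengths. Each new edge $\{u,w\}$ has length $|g(u)\,g(w)|=|g(v)\,g(w)|$, which equals the length of the old edge $\{v,w\}$. The deleted edge $\{u,v\}$ has length $0$. Merging a duplicate edge only drops a nonnegative summand. Therefore
\begin{equation}
|g'|\le |g|=\smt(\mathcal{A}).
\end{equation}
On the other hand, $|g'|\ge \mpn[G',\mathcal{A}]\ge\smt(\mathcal{A})$. Hence $|g'|=\smt(\mathcal{A})$.

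The graph $G'$ has one fewer vertex than $G$. Since vertex counts are finite, repeating the step terminates, and the final network has no degenerate edges. This is the desired $G'$ and $g'$.

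The main obstacle is bookkeeping rather than analysis. One must verify that the contracted graph is still a simple connected graph whose boundary is exactly $\mathcal{A}$, and check the edge case where $v$'s only neighbor is $u$. The length argument must also remain valid when $\smt(\mathcal{A})=\infty$. In that case the conclusion is immediate, because every network then has length $\infty$, and the inequality chain still holds in $[0,\infty]$.
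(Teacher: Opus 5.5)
Your proof is correct and follows the paper's approach. Both arguments observe that a degenerate edge must have an interior endpoint (since $g$ is the identity on $\mathcal{A}$), contract that edge onto the other endpoint while restricting $g$, and iterate until no degenerate edges remain. Your handling of merged duplicate edges via $|g'|\le|g|=\smt(\mathcal{A})\le\mpn[G',\mathcal{A}]\le|g'|$ is actually slightly more careful than the paper's direct assertion that $|g'|=|g|$.
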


\begin{proof}

The boundary $\mathcal{A}$ of a graph $G=(V, E)$ is a subset of the space $X$, so all its elements are distinct by definition. By convention, $g|_{\mathcal{A}} = \text{id}$. Therefore, if a network $g$ is degenerate, that is, if there exist adjacent vertices $u, v\in V$ such that $g(u)=g(v)$, then at least one of the vertices $u$ or $v$ is an interior vertex of the graph $G$.

Without loss of generality, let $u$ be interior. We construct a graph $G'$ by means of the following algorithm.
\begin{itemize}
\item[(1)] Replace all edges of the form $\{u, x\}$ in $E$ with $x\neq v$ by edges $\{v, x\}$;
\item[(2)] Remove from $E$ the edge joining the vertices $u$ and $v$, and then remove the vertex $u$ from $V$;
\item[(3)] If there still remain $u', v'\in V$ such that $g(u')=g(v')$, then determine which of these two vertices is interior, and return to step (1) of the algorithm.
\end{itemize}

The sets $V$ and $E$ transformed by the above algorithm are denoted by $V'$ and $E'$, respectively. Thus, the result of the algorithm is a connected graph $G' = (V', E')$ joining the set $\mathcal{A}\subset X$. Note that the corresponding network $g'\in [G', \mathcal{A}]$ is, by construction, a non-degenerate network, and $|g'| = |g| = \smt(\mathcal{A}).$\qed

\end{proof}

The number of elements in $M$ is denoted by $\# M$.

\begin{definition}\label{dfn:mst}
Let $X$ be an extended metric space, let a graph $G = (V, E)$ join $\mathcal{A}\subset X$, let a network $g\in [G, \mathcal{A}]$ be non-degenerate and $|g| = \smt(\mathcal{A})$. If there does not exist a graph $G' = (V', E')$ joining $\mathcal{A}$ and a non-degenerate network $g'\in [G', \mathcal{A}]$ such that $|g'| = \smt(\mathcal{A})$ and $\# (V'\setminus \mathcal{A}) < \# (V\setminus \mathcal{A})$, then $g$ is called a minimal Steiner tree joining $\mathcal{A}$.
\end{definition}

\begin{remark}\label{rk:tree}
Note that the parametrizing graph of a minimal Steiner tree $g$ joining $\mathcal{A}$ is a tree in case of $|g| < \infty$, that is, a connected graph without cycles. Indeed, if the parametrizing graph of a minimal Steiner tree has a cycle then one of the edges of this cycle could be removed, which would reduce the length of the original network. But then $g$ would not be a minimal Steiner tree by the definition.
\end{remark}

\begin{proposition}\label{prop:infty_len}
Let $X$ be an extended metric space, $\mathcal{A}$ be a finite subset of $X$ and $G$ be a connected graph joining $\mathcal{A}$. If there exist points $a, b\in \mathcal{A}$ such that $|a\, b| = \infty$, then $\mpn [G, \mathcal{A}] = \infty$.
\end{proposition}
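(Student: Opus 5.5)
Fix an arbitrary network $g\in[G,\mathcal{A}]$; it suffices to show $|g|=\infty$. Since $\mpn[G,\mathcal{A}]$ is the infimum of such lengths, this forces $\mpn[G,\mathcal{A}]=\infty$. The idea is to use connectivity of $G$ to join $a$ and $b$ by a route, and then apply the triangle inequality of the extended metric along that route.

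The steps are as follows.

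1. Since $a,b\in\mathcal{A}=\partial G\subset V$ and $|a\,b|=\infty$, we have $a\neq b$ (non-degeneracy gives $|a\,a|=0$). Connectivity of $G$ therefore yields a route $\{v_1,v_2\},\{v_2,v_3\},\ldots,\{v_n,v_{n+1}\}$ with $v_1=a$ and $v_{n+1}=b$.

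2. We may assume the route has pairwise distinct edges: if a vertex repeats, cut out the loop between its occurrences, which leaves a path.

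3. Because $g$ is the identity on $\partial G$, we have $g(v_1)=a$ and $g(v_{n+1})=b$. Repeated use of the triangle inequality in the extended metric space gives
\begin{equation}
\infty=|a\,b|\le\sum_{k=1}^{n}\bigl|g(v_k)\,g(v_{k+1})\bigr|.
\end{equation}
Here arithmetic in $[0,\infty]$ is understood with $x+\infty=\infty$, and the triangle inequality remains valid with infinite values.

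4. Since the edges of the path are distinct elements of $E$ and all summands are nonnegative, this sum is bounded above by $|g|=\sum_{\{v_i,v_j\}\in E}\bigl|g(v_i)\,g(v_j)\bigr|$. Hence $|g|=\infty$.

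No step is a real obstacle. The only points needing care are the bookkeeping of the extended arithmetic in $[0,\infty]$ in step 3, and step 2, which ensures no edge is counted twice when comparing the path sum to $|g|$. Even without step 2 the conclusion would hold, since an infinite sum forces some single summand to be infinite.
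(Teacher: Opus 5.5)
Your proposal is correct and follows essentially the same argument as the paper: join $a$ and $b$ by a route in the connected graph $G$, use $g|_{\mathcal{A}}=\mathrm{id}$ and the iterated triangle inequality to get $\infty=|a\,b|\le\sum_k|g(v_k)\,g(v_{k+1})|$, and conclude $|g|=\infty$ for every $g$. Your extra bookkeeping (reducing to a path with distinct edges, and the extended arithmetic in $[0,\infty]$) only makes explicit what the paper's remark that some edge must have infinite length already covers.
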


\begin{proof}

Let $a, b \in \mathcal{A}$ be such that $|a\, b| = \infty$. Since the graph $G$ is connected, there exists a path $\{v_0 = a, v_1\}, \{v_1, v_2\},\ldots , \{v_{m-1}, v_m = b\}$ connecting the vertices $a$ and $b$. Consider an arbitrary network $g\in [G, \mathcal{A}]$. Recall that $g|_{\mathcal{A}} = \text{id}$. Hence, by the triangle inequality applied iteratively, we obtain
\begin{equation}
\infty = |a\, b| = \bigl|g(a)\, g(b)\bigr| \le \sum_{i=1}^m \bigl|g(v_{i-1})\, g(v_i)\bigr|.
\end{equation}
Thus, any network $g\in [G, \mathcal{A}]$ contains an edge of infinite length, hence $|g| = \infty$. Consequently, $\mpn [G, \mathcal{A}] = \inf\limits_{g\in [G, \mathcal{A}]} |g| = \infty.$\qed

\end{proof}

\begin{remark}\label{rk:arbitr}
Thus, by Proposition~\ref{prop:infty_len}, if there exist $a, b\in \partial G = \mathcal{A}\subset X$ such that $|a\, b| = \infty$ then any network from $[G, \mathcal{A}]$ is minimal parametric network of type $G$ joining $\mathcal{A}$. 
\end{remark}

Since a graph $G$ is arbitrary in Proposition~\ref{prop:infty_len}, we obtain

\begin{corollary}\label{cor:smt_infty}
Let $X$ be an extended metric space and $\mathcal{A}$ be a finite subset of $X$. If there exist points $a, b\in \mathcal{A}$ such that $|a\, b| = \infty$, then $\smt(\mathcal{A}) = \infty$.
\end{corollary}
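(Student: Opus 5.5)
The statement to prove is Corollary~\ref{cor:smt_infty}. I would derive it directly from Proposition~\ref{prop:infty_len} together with the definition of $\smt(\mathcal{A})$ as an infimum of $\mpn[G,\mathcal{A}]$ over all connected graphs $G$ with $\partial G=\mathcal{A}$.

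First, fix points $a,b\in\mathcal{A}$ with $|a\,b|=\infty$. Next, take an arbitrary connected graph $G$ joining $\mathcal{A}$. The hypotheses of Proposition~\ref{prop:infty_len} hold for this $G$: $X$ is an extended metric space, $\mathcal{A}$ is finite, $G$ is connected with $\partial G=\mathcal{A}$, and $a,b\in\mathcal{A}$ satisfy $|a\,b|=\infty$. Hence $\mpn[G,\mathcal{A}]=\infty$.

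Since $G$ was arbitrary, every term in the infimum defining $\smt(\mathcal{A})$ equals $\infty$. The infimum of a set consisting only of $\infty$ is $\infty$, so $\smt(\mathcal{A})=\infty$.

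The only point requiring care is that the family of admissible graphs is nonempty, so that the infimum is not taken over the empty set. This holds because $\mathcal{A}$ is finite: the complete graph on the vertex set $\mathcal{A}$, or a path through its elements, is connected and joins $\mathcal{A}$. Its existence is not strictly needed, though, since even under the convention $\inf\emptyset=\infty$ the conclusion is unchanged. There is therefore no genuine obstacle; the proof is a one-line consequence of Proposition~\ref{prop:infty_len}.
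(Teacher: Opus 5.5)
Your proposal is correct and matches the paper's argument: the paper also derives the corollary by noting that Proposition~\ref{prop:infty_len} applies to an arbitrary connected graph $G$ joining $\mathcal{A}$, so every $\mpn[G,\mathcal{A}]$ in the infimum defining $\smt(\mathcal{A})$ equals $\infty$. Your remark on the nonemptiness of the family of admissible graphs is a harmless extra detail the paper leaves implicit.
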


\begin{remark}\label{rk:solut}
Note that, since $g\big|_{\partial G = \mathcal{A}} = \text{id}$, then $\bigl[G = (\mathcal{A}, E), \mathcal{A}\bigr]$ consists of an unique network $g$, that is $\bigl[(\mathcal{A}, E), \mathcal{A}\bigr] = \{g\}$. Thus, by Corollary~\ref{cor:smt_infty}, if there exist $a, b\in \mathcal{A}\subset X$ such that $|a\, b| = \infty$ and a graph $G = (\mathcal{A}, E)$ is connected then, due to Definition~\ref{dfn:mst}, the unique network $g$ from $[G, \mathcal{A}]$ is minimal Steiner tree joining $\mathcal{A}$. Consequently, henceforth we assume that in this case the graph $G = (\mathcal{A}, E)$ is also a tree.
\end{remark}

\begin{proposition}\label{prop:finite_len_on_bound_points}
Let $X$ be an extended metric space, $\mathcal{A}$ be a finite subset of $X$ and $G$ be a connected graph joining $\mathcal{A}$. If for any points $a, b\in \mathcal{A}$ we have $|a\, b| < \infty$, then $\mpn [G, \mathcal{A}] < \infty$.
\end{proposition}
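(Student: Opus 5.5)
To show $\mpn[G,\mathcal{A}]<\infty$ it is enough to exhibit a single network $g\in[G,\mathcal{A}]$ with $|g|<\infty$, since $\mpn$ is an infimum over $[G,\mathcal{A}]$.

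\emph{Construction.} If $\mathcal{A}$ is empty, map every vertex of $G$ to one arbitrary point $p\in X$. Then every edge is degenerate and $|g|=0$; if $X$ is empty there is nothing to prove. Otherwise fix a point $a_0\in\mathcal{A}$ and define $g\colon V\to X$ by
\begin{equation*}
g(v)=v \text{ for } v\in\mathcal{A},\qquad g(v)=a_0 \text{ for } v\in V\setminus\mathcal{A}.
\end{equation*}
This respects the convention $g|_{\partial G}=\mathrm{id}$, so $g\in[G,\mathcal{A}]$.

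\emph{Bounding the edge lengths.} Take any edge $\{u,v\}\in E$. Each endpoint is sent by $g$ into $\mathcal{A}$, since interior vertices go to $a_0\in\mathcal{A}$ and boundary vertices go to themselves. There are three cases.
\begin{itemize}
\item[(1)] Both $u$ and $v$ are boundary vertices. Then the edge length is $|u\,v|$, which is finite by hypothesis.
\item[(2)] Exactly one endpoint, say $u$, is a boundary vertex. Then the edge length is $|u\,a_0|$, which is finite by hypothesis.
\item[(3)] Both endpoints are interior. Then the edge length is $|a_0\,a_0|=0$.
\end{itemize}
The graph $G$ is finite, since its vertex set $V$ is a finite set by the definition of a graph. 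Hence $|g|$ is a finite sum of finite nonnegative numbers, so $|g|<\infty$ and therefore $\mpn[G,\mathcal{A}]\le|g|<\infty$.

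The only points requiring care are the bookkeeping ones: that $g$ really lies in $[G,\mathcal{A}]$, and the degenerate case of an empty boundary. Neither is a genuine obstacle, and the triangle inequality is not needed at all.
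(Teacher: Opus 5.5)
Your proof is correct and matches the paper's argument: fix $a_0\in\mathcal{A}$, send every interior vertex to $a_0$, and note that each edge then has length $0$ or a finite distance between two points of $\mathcal{A}$. Your treatment of $\mathcal{A}=\emptyset$ goes beyond the paper, which tacitly assumes $\mathcal{A}\neq\emptyset$; your aside on empty $X$ is not quite right, since then $[G,\mathcal{A}]=\emptyset$ whenever $V\neq\emptyset$ and the infimum is $\infty$, but that degenerate case lies outside the intended scope.
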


\begin{proof}

Let us fix a vertex $a\in \mathcal{A}$ and consider such network $g'\in [G, \mathcal{A}]$:
\begin{equation}
g'(v) = \left\{
\begin{aligned}
    & v, && \text{if } v\in \mathcal{A} \\
    & a,  && \text{if } v\in V\setminus \mathcal{A}.
\end{aligned}
\right.
\end{equation}
Since, by condition, for any points $a, b\in \mathcal{A}$ we have $|a\, b| < \infty$, then $|g'| < \infty$. So $\mpn [G, \mathcal{A}] = \inf\limits_{g\in [G, \mathcal{A}]} |g| \le |g'| < \infty.$\qed

\end{proof}

Note that on an extended metric space $X$ one can naturally introduce a binary relation $\sim$, which is an equivalence relation: $x \sim y$ if and only if $|x\, y| < \infty$. 

\begin{definition}\label{dfn:finiteness}
The equivalence class of the relation $\sim$ on $X$ is called a finiteness class.
\end{definition}

\begin{remark}
According to Remark~\ref{rk:arbitr}, Remark~\ref{rk:solut}, and Proposition~\ref{prop:finite_len_on_bound_points}, the problem of finding a minimal parametric network or a minimum Steiner tree can be nontrivial only within a finiteness class. Moreover, each finiteness class is a metric space. \textbf{Therefore, henceforth all statements will be formulated only for metric spaces.}
\end{remark}

The following two propositions, as well as Remark~\ref{rk:finite_set}, can be found, for example, in~\cite[Lemma~1 and Lemma~2]{Bednov}. However, there they are proved for the case when $X$ is a real normed space. In fact, the proofs for the case when $X$ is a metric space are identical. We present these proofs for completeness.

\begin{proposition}\label{prop:deg_three}
Let $X$ be a metric space, $\mathcal{A}$ be a finite subset of $X$, $G$ be a connected graph joining $\mathcal{A}$ and $g\in [G, \mathcal{A}]$ be a minimal Steiner tree joining $\mathcal{A}$. Then any interior vertex of $G$ has degree at least $3$.
\end{proposition}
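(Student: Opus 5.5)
We argue by contradiction and treat separately an interior vertex of degree $1$ and one of degree $2$; degree $0$ cannot occur because $G$ is connected and contains at least one boundary vertex (if $\#V = 1$ there are no interior vertices at all). Throughout, $g$ is non-degenerate with $|g| = \smt(\mathcal{A})$, and by Definition~\ref{dfn:mst} no non-degenerate network of length $\smt(\mathcal{A})$ has fewer interior vertices. Since $X$ is a metric space and $\mathcal{A}$ is finite, Proposition~\ref{prop:finite_len_on_bound_points} gives $\smt(\mathcal{A}) < \infty$. So Remark~\ref{rk:tree} applies, and $G$ is a tree.

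\emph{Degree $1$.} Let $u$ be interior with unique neighbor $w$. Delete $u$ and the edge $\{u, w\}$ to get $G'$. It is still connected, since a leaf separates nothing, and it still joins $\mathcal{A}$, since $u \notin \mathcal{A}$. Let $g' = g|_{V\setminus\{u\}}$; it is again non-degenerate. Then $|g'| = |g| - |g(u)\,g(w)| \le \smt(\mathcal{A})$, so $|g'| = \smt(\mathcal{A})$, because $g'$ is a network joining $\mathcal{A}$ and its length cannot drop below the infimum. (In fact the edge length must be $0$, which contradicts non-degeneracy, but this is not even needed.) Now $g'$ is non-degenerate, has length $\smt(\mathcal{A})$, and has fewer interior vertices, contradicting Definition~\ref{dfn:mst}.

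\emph{Degree $2$.} Let $u$ be interior with neighbors $w_1 \ne w_2$. Delete $u$ with its two edges and add the edge $\{w_1, w_2\}$; this is a legitimate simple-graph edge, and it is not already present because $G$ is a tree. The new graph $G'$ is connected and joins $\mathcal{A}$. Let $g' = g|_{V\setminus\{u\}}$. By the triangle inequality,
\begin{equation}
|g'| = |g| - |g(u)\,g(w_1)| - |g(u)\,g(w_2)| + |g(w_1)\,g(w_2)| \le |g|,
\end{equation}
so again $|g'| = \smt(\mathcal{A})$.

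The main obstacle is keeping $g'$ non-degenerate, since we may have $g(w_1) = g(w_2)$. If that happens, we apply the contraction algorithm from the proof of Proposition~\ref{prop:nondeg} to $g'$. That algorithm only merges an interior vertex into a neighbor, so it preserves the length and never increases the number of interior vertices. Here both $w_1, w_2$ cannot be boundary vertices, because distinct boundary points have distinct images under $g|_{\mathcal{A}} = \mathrm{id}$. So at least one of them is interior, and the contraction removes at least one more. In either case we obtain a non-degenerate network of length $\smt(\mathcal{A})$ with strictly fewer interior vertices than $g$, which contradicts Definition~\ref{dfn:mst}. Hence every interior vertex has degree at least $3$.
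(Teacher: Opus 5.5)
Your proof is correct and follows the paper's argument: delete a degree-$1$ interior vertex, or splice out a degree-$2$ one using the triangle inequality, and contradict Definition~\ref{dfn:mst}. You also check that the spliced network is non-degenerate, which Definition~\ref{dfn:mst} requires and the paper's proof passes over; a quicker fix is to note that $g(w_1)=g(w_2)$ is impossible, since it would give $|g'| = |g| - 2\,|g(u)\,g(w_1)| < \smt(\mathcal{A})$.
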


\begin{proof}

Suppose that in $G = (V, E)$ there exists an interior vertex $v\in V$ of degree less than $3$. The graph $G$ is connected, so the case $\deg(v) = 0$ is impossible.

The case $\deg(v) = 1$ is also impossible. Indeed, if $\deg v = 1$, then there exists exactly one vertex $u\in V$ to which $v$ is connected. Remove the edge $\{u, v\}$ from $E$ and remove the vertex $v$ from $V$. Denote the resulting sets of edges and vertices by $E'$ and $V'$, respectively. But then, due to the non-degeneracy of the network $g$, the new network $g' = g|_{V'}\in \bigl[G' = (V', E'), \mathcal{A}\bigr]$ will have a weight less than $|g|$, which contradicts the minimality of $g$.

Show that the case $\deg(v) = 2$ is also impossible. The vertex $v$ is connected to exactly two vertices of $V$, which we denote by $u$ and $w$. By the triangle inequality in the metric space $X$ we have:
\begin{equation}
\bigl|g(u)\, g(w)\bigr|\le |g(u)\, g(v)| + |g(v)\, g(w)|.
\end{equation}
Remove the edges $\{u, v\}$, $\{v, w\}$ from $E$, add the edge $\{u, w\}$ to $E$, and remove the vertex $v$ from $V$. Denote the resulting sets of edges and vertices by $E'$ and $V'$, respectively. But then the network $g' = g|_{V'}\in \bigl[G' = (V', E'), \mathcal{A}\bigr]$ will have a weight not exceeding $|g|$. Again we arrive at a contradiction with the definition of a minimal Steiner tree joining $\mathcal{A}$. Consequently, $\deg(v)\ge 3$.\qed

\end{proof}

\begin{proposition}\label{prop:int_vert}
Let $X$ be a metric space, a connected graph $G = (V, E)$ join $\mathcal{A}\subset X$, $\# \mathcal{A} = n < \infty$ and $g\in [G, \mathcal{A}]$ be a minimal Steiner tree joining $\mathcal{A}$. Then the graph $G$ has at most $n-2$ interior vertices.
\end{proposition}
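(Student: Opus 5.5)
The plan is a degree-counting argument on the parametrizing graph, combining Remark~\ref{rk:tree} with Proposition~\ref{prop:deg_three}. Let $k = \#(V\setminus\mathcal{A})$ be the number of interior vertices. The case $|g| = \infty$ has to be separated from the case $|g|<\infty$, because Remark~\ref{rk:tree} is stated only for finite length. Since we now work in a metric space, every network has finite length. So we may assume $|g|<\infty$, and by Remark~\ref{rk:tree} the graph $G$ is a tree. In the degenerate extended situation covered by Remark~\ref{rk:solut}, $G=(\mathcal{A},E)$ has no interior vertices at all, and there is nothing to prove. Consequently, if $n = 1$ the statement should be read in the same spirit: a minimal Steiner tree has no interior vertices, since deleting them keeps length $0$. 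In that case $k=0$, and the bound $n-2$ is interpreted accordingly (or the case is excluded as trivial). I will note this edge case explicitly and then assume $n\ge 2$.

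For a tree with $n+k$ vertices we have $\#E = n+k-1$, so the handshake lemma gives
\begin{equation*}
\sum_{v\in V}\deg(v) = 2(n+k-1).
\end{equation*}
Each boundary vertex has degree at least $1$, because $G$ is connected and $n+k\ge 2$. Each interior vertex has degree at least $3$ by Proposition~\ref{prop:deg_three}. Therefore
\begin{equation*}
2(n+k-1)\ge n+3k,
\end{equation*}
which rearranges to $k\le n-2$, as required.

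The only delicate point is justifying that $G$ is a tree, namely applying Remark~\ref{rk:tree}. That remark argues by removing an edge from a cycle. Strictly speaking, removing an edge does not strictly reduce the length only if the edge has positive length, and this is exactly where non-degeneracy of $g$ (part of Definition~\ref{dfn:mst}) enters: removing a nondegenerate edge from a cycle keeps the graph connected and strictly decreases $|g|$, contradicting $|g|=\smt(\mathcal{A})$. I expect to spell this out briefly. The degree bound for boundary vertices is immediate from connectedness, and the remaining arithmetic is routine.
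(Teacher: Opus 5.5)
Your proposal is correct and is essentially the paper's proof: $G$ is a tree by Remark~\ref{rk:tree}, and the handshake identity $2(n+k-1)=\sum_{v\in V}\deg(v)\ge n+3k$, together with Proposition~\ref{prop:deg_three}, gives $k\le n-2$. Your extra care about the edge case is justified: the paper's step ``$\deg(v)\ge 1$ by connectedness'' silently requires $\#V\ge 2$, and for $n=1$ the one-vertex graph is a minimal Steiner tree with $k=0>n-2$, so the statement really needs $n\ge 2$.
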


\begin{proof}

Let $k$ be the number of internal vertices of the graph $G$. Then $\# V = n + k$ is the total number of vertices. Since, by Remark~\ref{rk:tree}, $G$ is a tree, the fundamental relation 
\begin{equation}
\# E = \# V - 1 = n + k - 1 
\end{equation}
holds. It is also well known that the sum of the degrees of all vertices of any undirected graph equals twice the number of edges: 
\begin{equation}
\sum\limits_{v\in V} \deg(v) = 2\cdot \#E = 2(n + k -1).
\end{equation}
On the other hand, since the graph $G$ is connected, for every vertex $v\in V$ we have $\deg(v) \ge 1.$ Moreover, if $v\in V\setminus \mathcal{A}$, then by Proposition~\ref{prop:deg_three} we have $\deg(v) \ge 3.$ Hence 
\begin{equation}
\sum\limits_{v\in V} \deg(v) \ge n + 3k.
\end{equation}
Consequently, $2(n + k -1)\ge n + 3k.$ Finally, we obtain $n - 2\ge k.$\qed

\end{proof}

\begin{remark}\label{rk:finite_set}
By Cayley's formula, for every positive integer $m$, the number of trees on $m$ labeled vertices is $m^{m-2}$. Consequently, in view of Proposition~\ref{prop:int_vert}, we can assume that in the expression $\inf\limits_{G\colon \partial G = \mathcal{A}} \mpn[G, \mathcal{A}]$ the infimum is taken over a finite set of graphs. Therefore, the infimum can be replaced by a minimum, that is, we can henceforth write 
\begin{equation}
\smt(\mathcal{A}) = \min\limits_{G\colon \partial G = \mathcal{A}} \mpn[G, \mathcal{A}].
\end{equation}
\end{remark}

\subsection{Existence Theorems}

\begin{definition}
A proper metric space is a metric space in which every closed bounded subset is compact.
\end{definition}

In the book~\cite{IT}, a proof is given of the existence of a minimal parametric network, as well as of a minimal Steiner tree, on a connected complete Riemannian manifold, see~\cite[Chapter~2, Paragraph~2, Section~2.1, Theorem~2.1 and Corollary~2.1]{IT}. As noted by Tropin in his work~\cite{Tropin}, the idea of the proof of Theorem~2.1 (and also Corollary~2.1) from~\cite{IT} remains exactly the same if one replaces the Riemannian manifold by a proper metric space (indeed, by the Hopf--Rinow theorem, connected Riemannian manifold is a proper metric space if and only if it is a complete metric space). To be complete, we will provide this proof.

\begin{theorem}\label{thm:exist_mpn}
Let $X$ be a proper metric space and $\mathcal{A}$ be a finite subset of $X$. Then for any connected graph $G = (V, E)$ with boundary $\mathcal{A}$ there exists a minimal parametric network of type $G$ joining $\mathcal{A}$.
\end{theorem}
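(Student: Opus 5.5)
The plan is a compactness argument on the finitely many interior vertices. Write $V\setminus\mathcal{A}=\{v_1,\dots,v_k\}$. If $k=0$, then $[G,\mathcal{A}]$ consists of a single network and there is nothing to prove. So assume $k\ge 1$. A network $g\in[G,\mathcal{A}]$ is then identified with the point $\bigl(g(v_1),\dots,g(v_k)\bigr)\in X^k$, and the length $|g|$ becomes a function $L\colon X^k\to[0,\infty)$. Since $X$ is a metric space, all distances are finite. Moreover, $L$ is a finite sum of terms $|x_i\,x_j|$ or $|x_i\,a|$ with $a\in\mathcal{A}$ fixed (edges between two boundary vertices contribute constants). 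Each such term is continuous, so $L$ is continuous on $X^k$ with the product topology.

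Next, restrict to a bounded region. Fix $a_0\in\mathcal{A}$ and set $R:=|g_0|$, where $g_0$ sends every interior vertex to $a_0$; this is finite by Proposition~\ref{prop:finite_len_on_bound_points}. Only networks with $|g|\le R$ matter. For such $g$ and any interior vertex $v_i$, connectedness of $G$ gives a route from $v_i$ to some boundary vertex $a\in\mathcal{A}$ that passes through distinct edges. The triangle inequality along this route, as in the proof of Proposition~\ref{prop:infty_len}, yields $|g(v_i)\,a|\le|g|\le R$. Hence $g(v_i)\in B_{R+D}(a_0)$, where $D:=\max_{a,b\in\mathcal{A}}|a\,b|<\infty$ because $\mathcal{A}$ is finite.

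Let $K:=B_{R+D}(a_0)$. This set is closed (Lemma~\ref{lem:Closed}) and bounded, so it is compact because $X$ is proper. Therefore $K^k$ is compact. The continuous function $L$ attains its minimum on $K^k$ at some point $(x_1,\dots,x_k)$; let $g^*$ be the corresponding network. Then $|g^*|\le L(a_0,\dots,a_0)=R$. For any $g\in[G,\mathcal{A}]$, either $|g|>R\ge|g^*|$, or $|g|\le R$, in which case the point of $g$ lies in $K^k$ and so $|g|\ge|g^*|$. Thus $g^*$ is a minimal parametric network of type $G$.

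The only step needing care is the boundedness estimate. It must be checked that every interior vertex really lies within distance $|g|$ of the boundary, which uses connectedness of $G$ and nonnegativity of all edge lengths. The rest is the standard Weierstrass argument.
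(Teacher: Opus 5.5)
Your proof is correct and follows essentially the same route as the paper. In both, routes in the connected graph bound every interior vertex of a short network inside a closed ball. That ball is compact since $X$ is proper, and continuity of the length functional then gives a minimizer. The paper argues via a minimizing sequence $\{g_n\}$ with convergent subsequences at each vertex. You instead apply the extreme value theorem to $L$ on $K^k$ directly, which also takes care of the need for a common subsequence across all vertices.
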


\begin{proof}

Consider a sequence $\{g_n\}\subset [G, \mathcal{A}]$ such that $\lim\limits_{n\rightarrow \infty} |g_n| \rightarrow \mpn[G, \mathcal{A}]$. Without loss of generality, we may assume that the sequence $|g_n|$ is decreasing. Hence, $\bigl\{|g_n|\bigr\}$ is a bounded sequence.

For each vertex $v\in V$, the sequence $g_n$ generates a corresponding sequence of points $\bigl\{g_n(v)\bigr\}\subset X$. In the case $v\in \mathcal{A}$, this sequence is constant, because $g_n(v)=v$. Fix a point $a\in \mathcal{A}$. Since the graph $G$ is connected, for any $v\in V$ there exists a route $\{v_1 = v, v_2\}, \{v_2, v_3\},\ldots , \{v_{k-1}, v_k = a\}$ connecting $v$ and $a$. Applying the triangle inequality successively, we obtain
\begin{equation}
\bigl|g_n(v)\, a\bigr|\le \sum\limits_{i=1}^{k-1} \bigl|g(v_i)\, g(v_{i+1})\bigr| \le |g_n| \le |g_1|.
\end{equation}
Since the vertex $v$ is arbitrary, then there exists a number $0 < r < \infty$ such that 
\begin{equation}
\bigcup\limits_{v\in V} \bigcup\limits_{n = 1}^{\infty} \bigl\{g_n(v)\bigr\} \subset B_r(a).
\end{equation}
As $X$ is a proper metric space, $B_r(a)$ is compact. Therefore, for each $v\in V$, there exists a subsequence $\bigl\{g_{n_m}(v)\bigr\}\subset \bigl\{g_n(v)\bigr\}$ converging to some point $v_g\in B_r(a)$. 

Consider the network $\widetilde{g}\in [G, \mathcal{A}]$ defined as follows:
\begin{equation}
\widetilde{g}\colon v\mapsto v_g.
\end{equation}
If $a\in \mathcal{A}$, then we have $\widetilde{g}(a) = a$. We show that $|\widetilde{g}| = \mpn[G, \mathcal{A}]$.

The distance function is jointly continuous in both variables, so for any edge $\{u, v\}\in E$ we have
\begin{equation}
\lim\limits_{m\rightarrow \infty} \bigl|g_{n_m}(u)\, g_{n_m}(v)\bigr| = \bigl|\widetilde{g}(u)\, \widetilde{g}(v)\bigr|.
\end{equation}
Hence,
\begin{multline}
|\widetilde{g}| = \sum\limits_{\{u, v\}\in E} \bigl|\widetilde{g}(u)\, \widetilde{g}(v)\bigr| = \sum\limits_{\{u, v\}\in E} \lim\limits_{m\rightarrow \infty} \bigl|g_{n_m}(u)\, g_{n_m}(v)\bigr| = \\ = \lim\limits_{m\rightarrow \infty} \sum\limits_{\{u, v\}\in E} \bigl|g_{n_m}(u)\, g_{n_m}(v)\bigr| = \lim\limits_{m\rightarrow \infty} \bigl|g_{n_m}\bigr| = \mpn[G, \mathcal{A}].
\end{multline}
Thus, $\widetilde{g}$ is a minimal parametric network of type $G$ joining $\mathcal{A}$.\qed

\end{proof}

From Theorem~\ref{thm:exist_mpn} and Remark~\ref{rk:finite_set} we obtain

\begin{corollary}
Let $X$ be a proper metric space and let $\mathcal{A}$ be a finite subset of $X$. Then there exists a minimal Steiner tree joining $\mathcal{A}$.
\end{corollary}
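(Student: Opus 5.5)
The plan is to reduce the statement directly to Theorem~\ref{thm:exist_mpn} and Remark~\ref{rk:finite_set}. By Remark~\ref{rk:finite_set}, $\smt(\mathcal{A})$ is a minimum over a finite family of graphs. Each graph in this family joins $\mathcal{A}$, has at most $n-2$ interior vertices, and is a tree. The reduction to this finite family uses Propositions~\ref{prop:deg_three} and~\ref{prop:int_vert}. Since these propositions presuppose a minimal Steiner tree, I will rephrase the argument so it does not rely on one. Any network can be simplified without increasing its length: remove degenerate edges as in Proposition~\ref{prop:nondeg}, delete cycle edges, delete interior leaves, and suppress interior vertices of degree $2$ using the triangle inequality. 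This yields a tree with at most $n-2$ interior vertices. Hence the infimum over all graphs equals the infimum over the finite family $\mathcal{G}$ of such trees on labeled vertex sets of size at most $2n-2$, up to isomorphism fixing $\mathcal{A}$.

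Next, for each $G\in\mathcal{G}$, Theorem~\ref{thm:exist_mpn} gives a network $g_G\in[G,\mathcal{A}]$ with $|g_G|=\mpn[G,\mathcal{A}]$. Since $\mathcal{G}$ is finite, I choose $G_0$ that minimizes $\mpn[G,\mathcal{A}]$. Then $|g_{G_0}|=\smt(\mathcal{A})$, and this value is finite because $X$ is a metric space (Proposition~\ref{prop:finite_len_on_bound_points}).

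Finally, the realizing network must be upgraded to a minimal Steiner tree in the sense of Definition~\ref{dfn:mst}. Proposition~\ref{prop:nondeg} applied to $g_{G_0}$ gives at least one non-degenerate network of length $\smt(\mathcal{A})$. Among all pairs $(G',g')$ with $g'$ non-degenerate and $|g'|=\smt(\mathcal{A})$, the number $\#(V'\setminus\mathcal{A})$ is a nonnegative integer. I therefore take a pair for which this number is minimal. By construction no pair has fewer interior vertices, so that network is a minimal Steiner tree.

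The main obstacle is the first step. Remark~\ref{rk:finite_set} is phrased via Proposition~\ref{prop:int_vert}, which assumes a minimal Steiner tree already exists, so citing it directly would be circular. I would make the finiteness reduction self-contained through the length-nonincreasing simplification described above.
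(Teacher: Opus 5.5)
Your proposal is correct and follows the paper's route: the paper derives the corollary directly from Theorem~\ref{thm:exist_mpn} and the finiteness of the graph family in Remark~\ref{rk:finite_set}, and leaves implicit the final step through Proposition~\ref{prop:nondeg} and the choice of a realizer with the fewest interior vertices, which you spell out. Your worry about circularity is justified, because Remark~\ref{rk:finite_set} as written relies on Proposition~\ref{prop:int_vert}, which presupposes a minimal Steiner tree; your length-nonincreasing reduction (deleting cycle edges and interior leaves, suppressing degree-$2$ interior vertices) is the right way to make that step self-contained, with the trivial caveat that for $n=1$ the bound should read $\max\{n-2,0\}$, which does not affect finiteness.
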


\begin{theorem}\label{thm:cl_boud_h}
A space $X$ is a proper metric space if and only if $\mathcal{P}_{\Cl, \B}(X)$ is a proper metric space.
\end{theorem}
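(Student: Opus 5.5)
The plan is to prove the two implications separately. The key tool is the Blaschke-type fact that, for a complete metric space, the hyperspace of nonempty compact subsets with $d_H$ is complete. I would reprove the needed part directly to keep the argument self-contained.

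\emph{($\Leftarrow$)} Suppose $\mathcal{P}_{\Cl, \B}(X)$ is proper. The map $\iota\colon x\mapsto\{x\}$ is an isometric embedding of $X$ into $\mathcal{P}_{\Cl, \B}(X)$, since $d_H(\{x\},\{y\})=|x\,y|$ by Definition~\ref{distance_H2}. Its image is closed: if $\{x_n\}\to K$ in $d_H$, then the diameters of $\{x_n\}$ are $0$, and diameter is $2$-Lipschitz under $d_H$, so $K$ has diameter $0$ and is a singleton. Now take a closed bounded $F\subset X$. Its image $\iota(F)$ is closed in $\iota(X)$, hence closed in the hyperspace, and it is bounded. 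By properness $\iota(F)$ is compact, so $F$ is compact.

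\emph{($\Rightarrow$)} Suppose $X$ is proper. Every closed bounded subset of $X$ is then compact, so $\mathcal{P}_{\Cl, \B}(X)$ is exactly the hyperspace of nonempty compacta. Let $\mathcal{F}$ be a closed bounded family in it, say $d_H(K,K_0)\le R$ for all $K\in\mathcal{F}$. Fix $a\in K_0$ and put $\rho=\operatorname{diam}K_0+R$. Then every $K\in\mathcal{F}$ lies in the compact set $C=B_\rho(a)$.

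It suffices to show that the hyperspace of nonempty closed subsets of the compact space $C$ is compact. I would do this by showing it is complete and totally bounded.

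\emph{Total boundedness.} Take a finite $\varepsilon$-net $N$ of $C$. Every nonempty closed $K\subset C$ is within $d_H$-distance $\varepsilon$ of the nonempty subset $\{p\in N: |p\,K|\le\varepsilon\}$. There are finitely many such subsets, so they form a finite $\varepsilon$-net of the hyperspace.

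\emph{Completeness.} For a $d_H$-Cauchy sequence $\{K_n\}$, set $K=\bigcap_m \Cl\bigl(\bigcup_{n\ge m}K_n\bigr)$. This is nonempty, as a decreasing intersection of nonempty compacta, and closed. Then verify $d_H(K_n,K)\to0$ via Definition~\ref{distance_H2}. For the bound on $\sup_{x\in K}|x\,K_n|$, use the Cauchy property directly. For the bound on $\sup_{y\in K_n}|y\,K|$, build a chain $y=y_n, y_{n_1},\ldots$ with $|y_{n_j}\,y_{n_{j+1}}|<2^{-j}$, take its limit, which lies in $K$, and conclude.

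Hence $\mathcal{F}$ is a closed subset of a compact space and is therefore compact. The completeness step is the main technical point. Everything else is routine once Definition~\ref{distance_H2} is applied.
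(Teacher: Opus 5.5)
Your proof is correct and follows the paper's route. For ($\Leftarrow$), singletons give a closed isometric copy of $X$ inside $\mathcal{P}_{\Cl, \B}(X)$. For ($\Rightarrow$), you confine the closed bounded family to a compact set and use compactness of the hyperspace of a compact space. The only differences are that you reprove that compactness fact (Blaschke's theorem, which the paper cites as \cite[Theorem~7.3.8]{Burago}) via total boundedness and completeness. You also supply the diameter argument for why a $d_H$-limit of singletons is a singleton, which the paper only asserts.
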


\begin{proof}

Let $\mathcal{P}_{\Cl, \B}(X)$ be a proper metric space. Note that $X\subset \mathcal{P}_{\Cl, \B}(X)$. Consider the isometric embedding
\begin{equation}
\pi\colon X\rightarrow \mathcal{P}_{\Cl, \B}(X);
\end{equation}
\begin{equation}
\pi\colon x\mapsto \{x\};
\end{equation}
\begin{equation}\label{eq:reduce_dH}
d_H\bigl(\pi(x),\pi(y)\bigr)=|x\, y|,
\end{equation}
for all $x,y\in X$. Let $A\subset X$ be closed and bounded. Then $\pi(A)=\bigl\{\{x\}\colon x\in A\bigr\}$ is obviously bounded. 

Show that $\pi(A)$ is closed. If a sequence $\{\{x_i\}\}\subset \pi(A)$ converges with respect to the metric $d_H$ then its limit should be a single-point subset $\{x'\}$. Because of~(\ref{eq:reduce_dH}), we obtain that $x'$ is the limit of the sequence $\{x_i\}\subset A$. Since $A$ is closed, then $x'\in A$ and so $\{x'\}\in \pi(A)$. Consequently, $\pi(A)$ is closed and bounded and so $\pi(A)$ is compact. Since an isometry is a homeomorphism onto its image, $A$ is also compact. Therefore $X$ is a proper metric space.

Conversely, suppose $X$ is a proper metric space. Let $\mathcal{A}\subset \mathcal{P}_{\Cl, \B}(X)$ be closed and bounded. We show that $\mathcal{A}$ is compact. Because $\mathcal{A}$ is bounded, its diameter with respect to $d_H$ equals some number $0\le r < \infty$. Hence there exists $A_0\in \mathcal{A}$ such that for every $A_1\in \mathcal{A}$ we have, with respect to the metric of $X$, that $A_1\subset B_r(A_0)\subset X$. Since $X$ is proper, $B_r(A_0)$ is compact. Then, by~\cite[Theorem~7.3.8]{Burago}, the space $\mathcal{P}_{\Cl, \B}\bigl(B_r(A_0)\bigr)$ is also compact.

For every $A_1\in \mathcal{A}$ we have $A_1\subset B_r(A_0)$, so $A_1\in\mathcal{P}_{\Cl, \B}\bigl(B_r(A_0)\bigr)$. The subset $\mathcal{A}$ is closed in $\mathcal{P}_{\Cl, \B}(X)$, hence it is closed in $\mathcal{P}_{\Cl, \B}\bigl(B_r(A_0)\bigr)$ as well. Compactness of $\mathcal{P}_{\Cl, \B}\bigl(B_r(A_0)\bigr)$ then implies compactness of $\mathcal{A}$.\qed

\end{proof}

\subsection{Connection with Fermat--Steiner Problem}\label{chap:connect}

\begin{proposition}\label{prop:Fer_St}
Let $X$ be a metric space, $\mathcal{A}$ be a finite subset of $X$ and $G = (V, E)$ be a connected graph joining $\mathcal{A}$. Let $v\in V\setminus \mathcal{A}$ and $\{u_1, \ldots, u_n\}\subset V$ be a set of all vertices adjacent to $v$ in $G$. If $g\in [G, \mathcal{A}]$ is a minimal parametric network, then $g(v)$ realizes the minimum of the sum of distances in $X$ to the elements $g(u_1), \ldots, g(u_n)$.
\end{proposition}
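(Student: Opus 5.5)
The plan is a direct exchange argument: modify $g$ only at the vertex $v$ and compare the lengths. Fix an arbitrary point $x\in X$ and define the network $g_x\colon V\to X$ by $g_x(v)=x$ and $g_x(w)=g(w)$ for all $w\neq v$. Since $v\in V\setminus\mathcal{A}$, the restriction of $g_x$ to $\mathcal{A}$ is still the identity, so $g_x\in[G,\mathcal{A}]$. By minimality of $g$ (Definition~\ref{dfn:mpn}) we have $|g|\le|g_x|$.

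Next, split the edge set $E$ into the edges incident to $v$, namely $\{v,u_1\},\ldots,\{v,u_n\}$, and the remaining edges $E'$. Since $G$ is simple, the $u_i$ are distinct and different from $v$. On $E'$ the networks $g$ and $g_x$ agree, so
\begin{equation*}
|g|=\Sigma'+\sum_{i=1}^n\bigl|g(v)\,g(u_i)\bigr|,\qquad |g_x|=\Sigma'+\sum_{i=1}^n\bigl|x\,g(u_i)\bigr|,
\end{equation*}
where $\Sigma'=\sum_{\{w,w'\}\in E'}|g(w)\,g(w')|$. Cancelling $\Sigma'$ in $|g|\le|g_x|$ gives $\sum_i|g(v)\,g(u_i)|\le\sum_i|x\,g(u_i)|$. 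Because $x$ is arbitrary, $g(v)$ minimizes $x\mapsto\sum_i|x\,g(u_i)|$.

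The only delicate point is the cancellation of $\Sigma'$, which requires it to be finite. The statement is formulated for a metric space, where every term is finite, so this causes no trouble. In the extended setting one would first invoke Proposition~\ref{prop:finite_len_on_bound_points}, or simply assume $|g|<\infty$, which holds within a finiteness class. I expect no further obstacles; the argument is just bookkeeping.
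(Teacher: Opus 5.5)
Your proposal is correct and is essentially the paper's own proof: replace $g(v)$ by an arbitrary point, use $|g|\le|g'|$ from minimality, and cancel the unchanged edges to obtain the minimizing inequality for $\sum_i |y\,g(u_i)|$. Your remark that this cancellation needs the remaining edge lengths to be finite (automatic in a metric space) makes explicit what the paper uses implicitly when it writes $|g'|-|g|$.
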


\begin{proof}

Consider an arbitrary point $y\in X$. Construct a new network $g'\in [G, \mathcal{A}]$ that coincides with $g$ on all vertices except $v$, and at vertex $v$ takes the value $g'(v)=y$. By minimality of $g$, we have $|g|\le |g'|$. Consequently,
\begin{eqnarray}
\begin{aligned}
0\le |g'|-|g| = \sum_{u\in \{u_1, \ldots, u_n\}}\bigl(|g'(v)\,g(u)|-|g(v)\,g(u)|\bigr) = \\ 
= \sum_{u\in \{u_1, \ldots, u_n\}}\bigl(|y\,g(u)|-|g(v)\,g(u)|\bigr).
\end{aligned}
\end{eqnarray}

Thus, for any $y\in X$, we get $\sum_{u\in \{u_1, \ldots, u_n\}}|y\,g(u)|\ge \sum_{u\in \{u_1, \ldots, u_n\}}|g(v)\,g(u)|$, i.e., $g(v)$ is a global minimum point of the function 
\begin{equation}
S_{\{u_1, \ldots, u_n\}}(y)=\sum_{u\in \{u_1, \ldots, u_n\}}|y\,g(u)|.
\end{equation}
Hence, $g(v)$ realizes the minimum of the sum of distances to the images of adjacent vertices.\qed

\end{proof}

\begin{definition}\label{dfn:Fer_St}
Let $X$ be a metric space and $\mathcal{A}$ be a finite subset of $X$. If the graph $G$ is a star graph, $\partial G = \mathcal{A}$ and $\mathcal{A}$ is the set of leaves of $G$, then the problem of finding all minimal parametric networks of type $G$ joining $\mathcal{A}$ is called the Fermat--Steiner problem on $\mathcal{A}$.
\end{definition}

Indeed, the Fermat--Steiner problem can be formulated more simply without using the concept of a network in a metric space.

\begin{definition}\label{dfn:Fer_St}
Let $X$ be a metric space and $\mathcal{A}$ be a finite subset of $X$. The Fermat--Steiner problem on $\mathcal{A}$ is the problem of finding all points of $X$ such that the sum of distances from each of them to the points of $\mathcal{A}$ is minimal.
\end{definition}

\begin{remark}\label{rk:Fer_St}
Thus, according to Proposition~\ref{prop:Fer_St}, \textbf{when studying the properties of an interior vertex {\boldmath $g(v)$} of a minimal parametric network {\boldmath $g$}, we will conveniently consider this vertex as one of the solutions to the Fermat--Steiner problem on the set of all vertices adjacent to {\boldmath $g(v)$} in the network {\boldmath $g$}.}
\end{remark}

\subsection{Fermat--Steiner Problem in Hyperspaces}

The purpose of this article is to study minimal parametric networks in hyperspaces $\mathcal{P}_0(X)$ of all nonempty arbitrary subsets of a metric space $X$ endowed with Hausdorff distance $d_H$. By virtue of the Proposition~\ref{prop:metr}, we may restrict ourselves to set of all closed subsets $\mathcal{P}_{\Cl}(X)$. In such case, a hyperspace $\mathcal{P}_{\Cl}(X)$ is an extended metric space. Henceforth, by $\mathcal{P}_{\Cl}^f(X)$ we will denote an arbitrary finiteness class of the hyperspace $\mathcal{P}_{\Cl}(X)$. 

More specifically, our further goal is to describe some properties of interior vertices of minimal parametric networks in $\mathcal{P}_{\Cl}^f(X)$. Therefore, by virtue of Remark~\ref{rk:Fer_St}, for convenience we will view such vertices as solutions to the Fermat--Steiner problem on the set of vertices adjacent to them in these networks. Namely, let $g$ be a minimal parametric network of type $G$ joining a finite set $\mathcal{A}\subset \mathcal{P}_{\Cl}^f(X)$. Let $v$ be an interior vertex of the graph $G$ and let $\{u_1,\ldots , u_n\}$ be the set of vertices adjacent to $v$ in $G$.

Let $\Sigma\subset \mathcal{P}_{\Cl}^f(X)$ be the set of all solutions of the Fermat--Steiner problem on $\bigl\{g(u_1),\ldots , g(u_n)\bigr\}$. First, by Proposition~\ref{prop:Fer_St}, we have $g(v)\in \Sigma$. Second, it is also obvious that for any $K\in \Sigma$ the network
\begin{equation}
g'(x) = \left\{
\begin{aligned}
    & g(x), && \text{if } x\neq v,\\
    & K,  && \text{if } x=v
\end{aligned}
\right.
\end{equation}
is a minimal parametric network of type $G$ joining $\mathcal{A}$. We introduce the notation
\begin{equation}
M_i:= g(u_i);
\end{equation}
\begin{equation}
\mathcal{M}:= \{M_1, \ldots, M_n\}.
\end{equation}
Based on the above, we will focus further on solving the Fermat--Steiner problem on $\mathcal{M}\subset \mathcal{P}_{\Cl}^f(X)$ in the formulation of Definition~\ref{dfn:Fer_St}. Thus, our task can be formulated as follows: we need to find all elements from $\mathcal{P}_{\Cl}^f(X)$ that minimize the function 
\begin{equation}
S_{\mathcal{M}}(Y) = \sum\limits_{i=1}^n d_H(Y, M_i).
\end{equation}

By analogy, the fixed finite set $\mathcal{M}$ will be called the \textit{boundary}, and each of its elements $M_i$ a \textit{boundary set}. Let $\Sigma(\mathcal{M})$ denote the set of all solutions of the Fermat--Steiner problem on the boundary $\mathcal{M}$. We introduce some more notation. Let $K\in \Sigma(\mathcal{M})$, then
\begin{equation}
d_i(K) := d_H(K, M_i);
\end{equation}
\begin{equation}
d(K) := \bigl(d_1(K),\ldots, d_n(K)\bigr);
\end{equation}
\begin{equation}
\Omega(\mathcal{M}) := \bigl\{d(K) : K\in \Sigma(\mathcal{M})\}.
\end{equation}

Note that for $\mathcal{M}\subset \mathcal{P}_{\Cl}^f(X)$, different elements $K$ and $K'$ from $\Sigma(\mathcal{M})$ can define the same vector from $\Omega(\mathcal{M})$, i.e., $d(K) = d(K').$ Moreover, for each element $K\in \Sigma(\mathcal{M})$, the vector $d(K)$ is uniquely determined. Therefore, the set of solutions $\Sigma(\mathcal{M})$ is divided into disjoint classes $\Sigma_d(\mathcal{M})$, each corresponding to a distinct vector $d = (d_1,\ldots, d_n)\in \Omega(\mathcal{M})$, where $d_i = d_H(K, M_i)$ for all $K\in \Sigma_d(\mathcal{M})$. Furthermore, each class $\Sigma_d(\mathcal{M})\subset \mathcal{P}_{\Cl}^f(X)$ is a partially ordered set with respect to inclusion. Note also, since $\mathcal{M}\subset \mathcal{P}_{\Cl}^f(X)$, then $d_i < \infty$ because of Proposition~\ref{prop:finite_len_on_bound_points}.

\section{Properties of Internal Vertices of Minimal Parametric Networks in Hyperspaces}\label{Main}
\subsection{Structure of $\Sigma_d(\mathcal{M})$}\label{chap:struct}

\begin{proposition}\label{prop:matreshka}
Let $X$ be a metric space, $\mathcal{M}=\{M_1, \ldots, M_n\}\subset \mathcal{P}_{\Cl}^f(X)$, $\Sigma(\mathcal{M})\neq \emptyset$ and $d\in \Omega(\mathcal{M})$. Let also $K_1, K_2\in \Sigma_d(\mathcal{M})$ such that $K_1\subset K_2$. For any $K\in \mathcal{P}_{\Cl}^f(X)$, if $K_1\subset K\subset K_2$, then $K\in \Sigma_d(\mathcal{M})$.
\end{proposition}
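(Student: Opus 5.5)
The plan is to compare $S_{\mathcal{M}}(K)$ with the minimal value, using Lemma~\ref{lem:inter} term by term. For each $i=1,\ldots,n$ we have $K_1\subset K\subset K_2$ and, since $K_1,K_2\in\Sigma_d(\mathcal{M})$, also $d_H(K_1,M_i)=d_H(K_2,M_i)=d_i$. Lemma~\ref{lem:inter}, applied with $A=K_1$, $B=K$, $C=K_2$, $D=M_i$ and $\alpha=d_i$, then gives $d_H(K,M_i)\le d_i$ for every $i$. In particular every $d_H(K,M_i)$ is finite, which agrees with the assumption $K\in\mathcal{P}_{\Cl}^f(X)$; any $K$ squeezed between $K_1$ and $K_2$ lies in the right finiteness class anyway.

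Summing over $i$, we get
\[
S_{\mathcal{M}}(K)=\sum_{i=1}^n d_H(K,M_i)\le\sum_{i=1}^n d_i=S_{\mathcal{M}}(K_1).
\]
Because $K_1\in\Sigma(\mathcal{M})$, the value $S_{\mathcal{M}}(K_1)$ is the minimum of $S_{\mathcal{M}}$ over $\mathcal{P}_{\Cl}^f(X)$. Therefore $S_{\mathcal{M}}(K)\ge S_{\mathcal{M}}(K_1)$, so equality holds and $K\in\Sigma(\mathcal{M})$.

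It remains to show that $K$ lies in the class $\Sigma_d(\mathcal{M})$, that is, $d(K)=d$. We have $\sum_i d_H(K,M_i)=\sum_i d_i$ and $d_H(K,M_i)\le d_i$ for each $i$, with all terms finite. Hence no inequality can be strict: if $d_H(K,M_j)<d_j$ for some $j$, the left-hand sum would be strictly smaller than the right-hand sum. This gives $d_H(K,M_i)=d_i$ for all $i$, so $K\in\Sigma_d(\mathcal{M})$.

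No step is a real obstacle. The only points needing care are that the sums involved are finite, which is why the finiteness-class assumption matters (it excludes the degenerate comparison $\infty\le\infty$), and that $K$ is required to be closed so that it belongs to the hyperspace under consideration.
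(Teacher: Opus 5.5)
Your proof is correct and follows essentially the same route as the paper: Lemma~\ref{lem:inter} gives $d_H(K,M_i)\le d_i$ for every $i$, and minimality of $\sum_i d_i$ then forces equality termwise. The paper compresses that last step into a single sentence, and you spell it out, including the finiteness needed to rule out a strict inequality.
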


\begin{proof}

By Lemma~\ref{lem:inter} we obtain $d_H(K, M_i)\le d_H(K_1, M_i)$ for all $i$. But $K_1\in \Sigma_d(\mathcal{M})$, so $d_H(K, M_i) = d_H(K_1, M_i).$ Hence, $K\in \Sigma_d(\mathcal{M})$.\qed

\end{proof}

Let $X$ be a metric space, $\mathcal{M} = \{M_1, \ldots , M_n\}\subset \mathcal{P}_{\Cl}^f(X)$, and $\widetilde{d} = (\widetilde{d}_1, \ldots, \widetilde{d}_n)$ be a vector with non-negative components. Let 
\begin{equation}
K_{\widetilde{d}}(\mathcal{M}):= \bigcap\limits_{i=1}^n B_{\widetilde{d}_i}(M_i).
\end{equation}
Since each set $B_{\widetilde{d}_i}(M_i)$ is closed by Lemma~\ref{lem:Closed}, $K_{\widetilde{d}}(\mathcal{M})$ is also closed. However, note that $K_{\widetilde{d}}(\mathcal{M})$ may be empty. In what follows, since it is clear at every instance which boundary $\mathcal{M} = \{M_1,\dots,M_n\}$ is meant, we will write $K_{\widetilde{d}}$ instead of $K_{\widetilde{d}}(\mathcal{M})$.

\begin{proposition}\label{prop:great}
Let $X$ be a metric space, $\mathcal{M} = \{M_1,\ldots , M_n\}\subset \mathcal{P}_{\Cl}^f(X)$, $\Sigma(\mathcal{M})\neq \emptyset$, and $d\in \Omega(\mathcal{M})$. Then $K_d$ is the greatest element with respect to inclusion from $\Sigma_d(\mathcal{M})$.
\end{proposition}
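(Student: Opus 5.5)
Take any $K\in\Sigma_d(\mathcal{M})$, which exists because $d\in\Omega(\mathcal{M})$. The plan has two parts. First, show that every element of $\Sigma_d(\mathcal{M})$ is contained in $K_d$. Second, show that $K_d$ itself belongs to $\Sigma_d(\mathcal{M})$. Together these say that $K_d$ is the greatest element with respect to inclusion.

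For the first part, fix $K\in\Sigma_d(\mathcal{M})$. For every $i$ we have $d_H(K,M_i)=d_i$. By Definition~\ref{distance_H2}, this gives $\sup_{x\in K}|x\,M_i|\le d_i$, so $|x\,M_i|\le d_i$ for every $x\in K$. Hence $K\subset B_{d_i}(M_i)$ for each $i$, and therefore $K\subset\bigcap_i B_{d_i}(M_i)=K_d$. In particular $K_d\neq\emptyset$. By Lemma~\ref{lem:Closed}, $K_d$ is closed, so $K_d\in\mathcal{P}_{\Cl}(X)$.

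For the second part, compute $d_H(K_d,M_i)$ through Definition~\ref{distance_H2}.
\begin{itemize}
\item One-sided term from $K_d$: since $K_d\subset B_{d_i}(M_i)$, we get $\sup_{x\in K_d}|x\,M_i|\le d_i$.
\item One-sided term from $M_i$: since $K\subset K_d$, for each $y\in M_i$ we have $|y\,K_d|\le|y\,K|$. Hence $\sup_{y\in M_i}|y\,K_d|\le\sup_{y\in M_i}|y\,K|\le d_H(K,M_i)=d_i$.
\end{itemize}
So $d_H(K_d,M_i)\le d_i<\infty$ for all $i$. This also places $K_d$ in the same finiteness class $\mathcal{P}^f_{\Cl}(X)$ as the $M_i$ and as $K$.

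The remaining step is to upgrade these inequalities to equalities. Summing gives $S_{\mathcal{M}}(K_d)\le\sum_i d_i=S_{\mathcal{M}}(K)$, which is the minimum value. Since $K$ is a minimizer, equality must hold in the sum. Each summand already satisfies $d_H(K_d,M_i)\le d_i$, so equality in the sum forces $d_H(K_d,M_i)=d_i$ for every $i$. Thus $K_d\in\Sigma(\mathcal{M})$ with $d(K_d)=d$, that is, $K_d\in\Sigma_d(\mathcal{M})$.

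I do not expect a real obstacle here. The only points needing care are the bookkeeping ones: checking that $K_d$ is nonempty and closed, so that it is an admissible competitor, and that it lies in the right finiteness class. The first part and the finiteness bound above handle both. Alternatively, once $K\subset K_d$ is known, the equality $d_H(K_d,M_i)=d_i$ can be phrased in the spirit of Lemma~\ref{lem:inter}.
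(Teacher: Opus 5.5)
Your proof is correct and follows the paper's argument. You first derive $K\subset K_d$ from the one-sided bound, then get $d_H(K_d,M_i)\le d_i$ via $K\subset K_d$, and finally force equality through minimality of $S_{\mathcal{M}}$. Beyond the paper, you also spell out the admissibility checks on $K_d$ (nonempty, closed, in the same finiteness class) and the termwise-equality step, both of which the paper leaves implicit.
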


\begin{proof}

Let $K\in \Sigma_d(\mathcal{M})$. Then, according to the definition of the Hausdorff distance in Definition~\ref{distance_H}, we have $K\subset B_{d_i}(M_i)$ for all $i$. Therefore, $K\subset \bigcap\limits_{i=1}^n B_{d_i}(M_i) = K_d$. Let us now show that $K_d\in \Sigma_d(\mathcal{M})$. 

By definition, $K_d\subset B_{d_i}(M_i)$. But since $K\in \Sigma_d(\mathcal{M})$, we have $M_i\subset B_{d_i}(K)\subset B_{d_i}(K_d)$. Therefore, $d_H(K_d, M_i) \le d_i$. But since $K\in \Sigma_d(\mathcal{M})$, we have 
\begin{equation}
d_H(K_d, M_i) = d_i.
\end{equation}
Thus, $K_d\in \Sigma_d(\mathcal{M})$ and, for any $K\in \Sigma_d(\mathcal{M})$, we have $K\subset K_d$.\qed

\end{proof}

\begin{remark}
It is obvious that in any partially ordered set, if a greatest element exists, then it is unique. Therefore, by Proposition~\ref{prop:great}, if $d\in \Omega(\mathcal{M})$, then $K_d$ is the unique greatest element with respect to inclusion in the class $\Sigma_d(\mathcal{M})$.
\end{remark}

The proof of the following statement repeats verbatim the proof of Assertion~3.1 from~\cite{ITT}.

\begin{proposition}\label{prop:min}
Let $X$ be a metric space, $\mathcal{M} = \{M_1,\ldots , M_n\}\subset \mathcal{P}_{\Cl}^f(X)$, $\Sigma(\mathcal{M})\neq \emptyset$, and $d\in \Omega(\mathcal{M})$. Then each compact set $K\in \Sigma_d(\mathcal{M})$ contains at least one minimal element with respect to inclusion in $\Sigma_d(\mathcal{M})$.
\end{proposition}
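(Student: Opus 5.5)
We apply Zorn's lemma in its dual form to the family $\mathcal{C}=\{L\in\Sigma_d(\mathcal{M})\colon L\subset K\}$, partially ordered by reverse inclusion. $\mathcal{C}$ is nonempty because $K\in\mathcal{C}$. A maximal element of $(\mathcal{C},\supset)$ is an inclusion-minimal element of $\Sigma_d(\mathcal{M})$ contained in $K$. Indeed, any $L'\in\Sigma_d(\mathcal{M})$ with $L'\subset L$ also lies in $\mathcal{C}$. So it suffices to show that every chain $\{L_\alpha\}\subset\mathcal{C}$ (totally ordered by inclusion) has a lower bound in $\mathcal{C}$. The candidate is $L=\bigcap_\alpha L_\alpha$.

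First, $L\ne\emptyset$ and $L$ is compact. Each $L_\alpha$ is closed in $X$ and contained in the compact set $K$, hence compact. A chain of nonempty compact sets has the finite intersection property, since a finite subfamily has a smallest member. Therefore $L$ is a nonempty closed subset of $K$, hence compact. Also $d_H(L,M_i)\le d_H(L,K)+d_H(K,M_i)<\infty$ because $K$ is bounded. So $L\in\mathcal{P}_{\Cl}^f(X)$, i.e.\ $L$ lies in the same finiteness class.

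Next we show $d_H(L,M_i)\le d_i$ for every $i$. Since $L\subset L_\alpha\subset B_{d_i}(M_i)$, we get $\sup_{x\in L}|x\,M_i|\le d_i$. The main step is the other one-sided bound: for each $m\in M_i$ we need $|m\,L|\le d_i$. For each $\alpha$ we have $|m\,L_\alpha|\le d_i$, and since $L_\alpha$ is compact we can choose $x_\alpha\in L_\alpha$ with $|m\,x_\alpha|\le d_i$. Set $C_\alpha=L_\alpha\cap B_{d_i}(m)$. These are nonempty compact sets forming a chain, so by the same finite intersection argument $\bigcap_\alpha C_\alpha\neq\emptyset$. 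A point $x$ in this intersection lies in $L$ and satisfies $|m\,x|\le d_i$, so $|m\,L|\le d_i$. Hence $d_H(L,M_i)\le d_i$.

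Finally, minimality of the total sum forces equality componentwise. We have $S_{\mathcal{M}}(L)\le\sum_i d_i=\min S_{\mathcal{M}}$, so $L\in\Sigma(\mathcal{M})$. If some $d_H(L,M_i)$ were strictly smaller than $d_i$, the sum would drop strictly below the minimum, which is impossible. Therefore $d(L)=d$ and $L\in\Sigma_d(\mathcal{M})$. Together with $L\subset K$ this gives $L\in\mathcal{C}$, and $L$ is a lower bound of the chain. Zorn's lemma then yields the required minimal element. The delicate point throughout is compactness, used both for the nonemptiness of $L$ and for the estimate $|m\,L|\le d_i$; this is why the statement assumes $K$ is compact.
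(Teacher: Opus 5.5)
Your proof is correct and follows essentially the paper's argument. The paper defers this statement to Assertion~3.1 of \cite{ITT} but writes out the same scheme for Proposition~\ref{prop:min_cl}: Zorn's lemma on the solutions contained in $K$, the intersection of a chain as a lower bound, compactness to get nonemptiness and $M_i\subset B_{d_i}(L)$, and minimality of $S_{\mathcal{M}}$ to force $d(L)=d$. The one difference is minor: you find the point of $L$ within $d_i$ of $m\in M_i$ via the finite intersection property of the chain $L_\alpha\cap B_{d_i}(m)$, whereas the paper uses nearest points and a convergent subnet.
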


\begin{proposition}\label{prop:min_cl}
Let $X$ be a proper metric space, $\mathcal{M} = \{M_1,\ldots , M_n\}\subset \mathcal{P}_{\Cl}^f(X)$, $\Sigma(\mathcal{M})\neq \emptyset$ and $d\in \Omega(\mathcal{M})$. Then each $K\in \Sigma_d(\mathcal{M})$ contains at least one minimal element with respect to inclusion in $\Sigma_d(\mathcal{M})$.
\end{proposition}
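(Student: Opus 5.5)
Fix $K\in\Sigma_d(\mathcal{M})$. The plan is to reduce to Proposition~\ref{prop:min} by finding a compact element of $\Sigma_d(\mathcal{M})$ inside $K$. Any minimal element of $\Sigma_d(\mathcal{M})$ lying below such a compact set also lies below $K$. The compactness will come from properness of $X$: every closed bounded subset of $X$ is compact.

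\emph{Case 1: $K$ is bounded.} Then $K$ is closed and bounded, hence compact because $X$ is proper. Proposition~\ref{prop:min} applied to $K$ gives the claim directly.

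\emph{Case 2: $K$ is unbounded.} Here I need a bounded closed $K'\subset K$ with $K'\in\Sigma_d(\mathcal{M})$. The constraint $M_i\subset B_{d_i}(K')$ must hold for every $i$, and the $M_i$ may themselves be unbounded, so simply cutting $K$ by a large ball will not work. I would instead argue by Zorn's lemma directly, and use properness only to control chains.

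\emph{Zorn's lemma step.} Order the set $\{K'\in\Sigma_d(\mathcal{M}) : K'\subset K\}$ by reverse inclusion. Take a chain $\{K_\alpha\}$ and set $L=\bigcap_\alpha K_\alpha$. The following needs to be shown.
\begin{itemize}
\item[(a)] $L\neq\emptyset$ and $M_i\subset B_{d_i}(L)$ for every $i$.
\item[(b)] $d_H(L,M_i)=d_i$.
\end{itemize}
The inequality $\sup_{x\in L}|x\,M_i|\le d_i$ is automatic since $L\subset K_\alpha$. Once (a) holds we get $d_H(L,M_i)\le d_i$. Equality then follows from optimality: $S_{\mathcal{M}}(L)\ge S_{\mathcal{M}}(K_\alpha)$ together with $d_H(L,M_j)\le d_j$ for all $j$ forces equality in each coordinate. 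Hence $L\in\Sigma_d(\mathcal{M})$ and it is a lower bound for the chain, and Zorn's lemma yields a minimal element below $K$.

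\emph{Main obstacle: proving (a).} Fix $i$ and $m\in M_i$. For each $\alpha$ and each $\varepsilon>0$ there is a point of $K_\alpha$ within $d_i+\varepsilon$ of $m$. Therefore the sets $C_\alpha=K_\alpha\cap B_{d_i+1}(m)$ are nonempty. They are closed and bounded, hence compact by properness, and they are nested along the chain. By the finite intersection property, $C_{\varepsilon}=\bigcap_\alpha \bigl(K_\alpha\cap B_{d_i+\varepsilon}(m)\bigr)$ is nonempty for each $\varepsilon\in(0,1]$. These sets are nested and compact in $\varepsilon$, so $\bigcap_{\varepsilon>0} C_\varepsilon\subset L\cap B_{d_i}(m)$ is nonempty. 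Thus $|m\,L|\le d_i$, which gives both $L\ne\emptyset$ and $M_i\subset B_{d_i}(L)$. Properness is used exactly here. If this runs smoothly, Case~1 is subsumed and Proposition~\ref{prop:min} is not even needed, but I would keep the case split in case the reader prefers the reduction.
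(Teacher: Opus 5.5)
Your argument is correct and is essentially the paper's proof. Both apply Zorn's lemma to $\{Y\in\Sigma_d(\mathcal{M}) : Y\subset K\}$ and take the intersection of a chain as the lower bound. Both use properness to show that $L$ meets $B_{d_i}(m)$ for every $m\in M_i$, and both get the equalities $d_H(L,M_i)=d_i$ from optimality of $S_{\mathcal{M}}$. The only difference is technical: you apply the finite intersection property to the nested compact sets $K_\alpha\cap B_{d_i+\varepsilon}(m)$, whereas the paper picks nearest points $k_\alpha\in K_\alpha$ and extracts a convergent subnet.

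One small thing to add: state explicitly that $L$ is closed, hence lies in $\mathcal{P}_{\Cl}^f(X)$ and is an admissible competitor, before you invoke $S_{\mathcal{M}}(L)\ge S_{\mathcal{M}}(K_\alpha)$.
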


\begin{proof}

If $K$ is finite, then it is obvious that $K$ contains a minimal element with respect to inclusion in $\Sigma_d(\mathcal{M})$. Now suppose $K$ is infinite.

Consider the set $\mathcal{C} = \bigl\{Y\in \Sigma_d(\mathcal{M}) \colon Y\subset K\bigr\}$ naturally ordered with respect to inclusion. Since $K\in \mathcal{C}$, we have $\mathcal{C}\neq \emptyset$. Let us show that each chain $\{K_{\alpha}\}_{\alpha\in \Lambda}\subset \mathcal{C}$ possesses a lower bound, and apply Zorn's Lemma. A natural candidate for the lower bound is $K_* := \bigcap\limits_{\alpha\in \Lambda} K_{\alpha}$. We will further assume that the chain $\{K_{\alpha}\}_{\alpha\in \Lambda}$ is ordered by decreasing inclusion relative to the growth of the indices. Let us show that $K_* := \bigcap\limits_{\alpha\in \Lambda} K_{\alpha}\neq \emptyset$ and $M_j\subset B_{d_j}(K_*)$.

So, $K_{\alpha}\subset K\in \Sigma_d(\mathcal{M})$. Hence, we have $d_H(K_{\alpha}, M_j) = d_j$ for all $\alpha$. Consequently,
\begin{equation}\label{eq:inclusion}
M_j\subset B_{d_j}(K_{\alpha})
\end{equation}
for all $\alpha$ and $j$. 

Choose a point $x$ in $M_j$. Since $X$ is a proper metric space and $K_{\alpha}$ is closed then there exists $k_{\alpha}\in K_{\alpha}$ such that $|x\, k_{\alpha}| = |x\, K_{\alpha}|$. So, by~(\ref{eq:inclusion}), we obtain $|x\, k_{\alpha}| \le d_j$, and therefore $\{k_{\alpha}\}\subset B_{d_j}(x)$.

Since $X$ is a proper metric space, $B_{d_j}(x)$ is compact. So, in $B_{d_j}(x)$, one can extract a convergent subnet from every net. Let $\{k_{{\alpha}_{\beta}}\}$ be a subnet which converges to some $k$. Since a chain is a linear ordered set, then, for any $\alpha_0$, we have $\{k_{{\alpha}_{\beta}} \colon \alpha_{\beta}\succeq \alpha_0\}\subset K_{\alpha_0}$. Consequently, by the closedness of $K_{\alpha_0}$ we obtain $k\in K_{\alpha_0}$ for all $\alpha_0$. Therefore $k\in \bigcap\limits_{\alpha\in \Lambda} K_{\alpha} = K_*$. Moreover, from the continuity of the distance function in a metric space it follows that $|x\, k|\le d_j$.

Thus, since the choice of the point $x\in M_j$ was arbitrary, we obtain $M_j\subset B_{d_j}(K_*)$. At the same time, since $d_H(K_{\alpha}, M_j) = d_j$ and $K_*\subset K_{\alpha}$, we have $K_*\subset B_{d_j}(M_j)$. Therefore $d_H(K_*, M_j)\le d_j$. Note that $K_*$ is closed because all $K_{\alpha}$ are closed. Consequently, $K_*$ belongs to the same finiteness class $\mathcal{P}_{\Cl}^f(X)$ as all $M_j$. But since $\min\limits_{Y\in \mathcal{P}_{\Cl}^f(X)} S_{\mathcal{M}}(Y) = \sum\limits_{j=1}^n d_j$, we have $d_H(K_*, M_j) = d_j$. Hence, $K_*\in \Sigma_d(\mathcal{M})$ and $K_*$ is a lower bound of the chain $\{K_{\alpha}\}_{\alpha\in \Lambda}\subset \mathcal{C}$. 

Thus, by Zorn's Lemma (in the formulation for minimal elements), $\mathcal{C}$ has a minimal element. This element is the required minimal (with respect to inclusion) solution contained in $K$.\qed

\end{proof}

\subsection{Implementation of One-Sided Hausdorff Distances on Interior Vertices}\label{chap:one_dist}

Let us write down the following definition.

\begin{definition}\label{dfn:far}
Let $X$ be a metric space, $\mathcal{M} = \{M_1, \ldots, M_n\}\subset \mathcal{P}_{\Cl}^f(X)$ and $d = (d_1, \ldots, d_n)\in \Omega(\mathcal{M})$. A point $x\in M_i$ is called $d$-far for $M_i$ if $U_{d_i}(x)\cap K_d = \emptyset$.
\end{definition}

In the paper~\cite[Theorem~3]{Gals_1} it was proved that in the case of a finite-dimensional normed $X$ and all nonempty convex compacts $M_i\in \mathcal{M}$, there exists an $M_j\in \mathcal{M}$ that has a $d$-far point. This fact essentially relied on the compactness of the sets $M_i$.

Note that from $U_{d_i}(x)\cap K_d = \emptyset$ and $d_H(M_i, K_d) = d_i$ it follows that $\sup\limits_{x\in M_i} |x\, K_d| = d_i$. Moreover, by the compactness of $M_i$ and the continuity of the distance function to a nonempty subset, from $\sup\limits_{x\in M_i} |x\, K_d| = d_i$ it follows that there exists an $x\in M_i$ such that $|x\, K_d| = d_i$ and hence $U_{d_i}(x)\cap K_d = \emptyset$.

Thus, for a finite-dimensional normed $X$, nonempty set of convex compacts \linebreak $\mathcal{M} = \{M_1,\ldots , M_n\}$ and $d\in \Omega(\mathcal{M})$, there always exists a compact $M_i$ such that $d_i = \sup\limits_{x\in M_i} |x\, K_d|$. That is, one of the Hausdorff distances in the sum $S_{\mathcal{M}}(K_d) = \sum\limits_{i=1}^n d_H(M_i, K_d)$ will be one-sided precisely from the side of the boundary set.

We will prove under some additional conditions that if we omit the compactness of the sets $M_i$ and the finite-dimensionality of the space $X$, then for $d = (d_1, \ldots, d_n)\in \Omega(\mathcal{M})$ there still exists an $i$ such that $d_i = \sup\limits_{x\in M_i} |x\, K_d|$.

Similarly, we will denote an arbitrary finiteness class of the hyperspace \linebreak $\mathcal{P}_{\Cl, \Conv}(X)$ by $\mathcal{P}_{\Cl, \Conv}^f(X)$.

\begin{theorem}\label{thm:one_side}
Let $X$ be a real normed space, 
\begin{equation}
\mathcal{M} = \{M_1,\ldots , M_n\}\subset \mathcal{P}_{\Cl, \Conv}^f(X),
\end{equation}
$\Sigma(\mathcal{M})\neq \emptyset$ and $d\in \Omega(\mathcal{M})$. Suppose also that there exists an index $s$ such that all nonempty sets of the form $B_r(M_s)\cap K_d$ belong to the same finiteness class $\mathcal{P}_{\Cl, \Conv}^f(X)$ to which all $M_i$ belong. Then there exist $i$ such that $d_i = \sup\limits_{x\in M_i} |x\, K_d|$.
\end{theorem}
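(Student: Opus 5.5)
Proof by contradiction. Suppose that for every $i$ we have $\sup_{x\in M_i}|x\,K_d| < d_i$. Since $K_d\in\Sigma_d(\mathcal{M})$ by Proposition~\ref{prop:great} and $d_H(K_d,M_i)=d_i$, Definition~\ref{distance_H2} then forces $\sup_{y\in K_d}|y\,M_i| = d_i$ for every $i$. Choose $\varepsilon>0$ with $M_i\subset B_{d_i-\varepsilon}(K_d)$ for all $i$. The goal is to shrink $K_d$ toward $M_s$ and obtain a closed set $K'$ with $S_{\mathcal{M}}(K')<S_{\mathcal{M}}(K_d)$, contradicting optimality.

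\textbf{Case $d_s>0$.} For $0<\delta<\min\{d_s,\varepsilon\}$ put $K' := B_{d_s-\delta}(M_s)\cap K_d$. This set is closed and convex (Lemmas~\ref{lem:Closed} and~\ref{lem:convexity}, together with convexity of $K_d$ as an intersection of convex balls). We must check it is nonempty; granting this, the hypothesis on $s$ puts $K'$ in the same finiteness class. Clearly $\sup_{y\in K'}|y\,M_s|\le d_s-\delta$ and $\sup_{y\in K'}|y\,M_i|\le d_i$ for $i\ne s$.

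The key step is to show $M_i\subset B_{d_i-\varepsilon+\eta(\delta)}(K')$ with $\eta(\delta)\to 0$ as $\delta\to 0$. For this I use Theorem~\ref{cont_criterion}. The sets $F_r(M_s,K_d)=B_r(M_s)\cap K_d$, for $r$ near $d_s$, all lie in one finiteness class by hypothesis, so their pairwise Hausdorff distances are finite. Hence $r\mapsto F_r(M_s,K_d)$ is continuous near $r=d_s$, provided $|M_s\,K_d|<d_s$. At $r=d_s$ we have $F_{d_s}=K_d$, since $K_d\subset B_{d_s}(M_s)$. Continuity then gives $d_H(K',K_d)\to 0$, so for small $\delta$ we get $d_H(K_d,K')<\varepsilon/2$. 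Consequently $M_i\subset B_{d_i-\varepsilon/2}(K')$ for all $i$, using Lemma~\ref{sum_1} or simply the triangle inequality for distances to sets. Therefore $d_H(K',M_s)\le \max\{d_s-\delta,\,d_s-\varepsilon/2\}<d_s$ and $d_H(K',M_i)\le d_i$ for $i\neq s$, so $S_{\mathcal{M}}(K')<\sum_i d_i$, a contradiction.

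\textbf{Degenerate subcases.} If $|M_s\,K_d|=d_s$, then Lemma~\ref{lem:equal} applied to $A=M_s$, $B=K_d$ gives $\sup_{x\in M_s}|x\,K_d|=d_s$, which directly contradicts the assumption. If $d_s=0$, then $M_s=K_d$ by Proposition~\ref{prop:metr}, so $\sup_{x\in M_s}|x\,K_d|=0=d_s$ and the conclusion holds with $i=s$. In the remaining case $|M_s\,K_d|<d_s$, which also gives nonemptiness of $K'$ for $\delta<d_s-|M_s\,K_d|$.

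\textbf{Main obstacle.} The delicate point is applying Theorem~\ref{cont_criterion} at the endpoint $r=d_s$. The theorem is stated on an open interval $(Q,T)$, so I take an interval containing $d_s$ in its interior, for example $(|M_s\,K_d|, d_s+1)$. This requires $F_r$ for $r>d_s$ (which equals $K_d$) also to lie in the finiteness class; that holds, so continuity at $d_s$ follows. If continuity in $d_H$ turns out to be awkward to invoke directly, the fallback is Proposition~\ref{prop:free_space}. Each $y\in K_d$ far from $\partial B_{d_s}(M_s)$ is already in $B_{d_s-\delta}(M_s)$. For points near the boundary, one shifts them along segments toward a point of $M_s$ and controls the displacement by convexity of the distance function (Proposition~\ref{prop:Yens_more}). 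The finiteness hypothesis is exactly what keeps this displacement uniformly bounded.
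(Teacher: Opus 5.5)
Your proof is correct and follows the paper's argument. You assume every one-sided supremum is strictly below $d_i$ and obtain a uniform slack $\varepsilon$. You then shrink $K_d$ to $B_{d_s-\delta}(M_s)\cap K_d$ and use Theorem~\ref{cont_criterion}, via the finiteness hypothesis, to keep $K_d$ Hausdorff-close to the shrunken set, which contradicts minimality of $S_{\mathcal{M}}$.

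Your direct choice of $\varepsilon$ with $M_i\subset B_{d_i-\varepsilon}(K_d)$ is slightly cleaner than the paper's detour through Proposition~\ref{prop:free_space}. You also make explicit two details the paper leaves implicit: the nonemptiness of the shrunken set, and the use of an open interval around $d_s$ in Theorem~\ref{cont_criterion}.
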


\begin{proof}

Suppose the contrary, i.e., that for all $M_i$, we have 
\begin{equation}\label{eq:positive}
d_i > \sup\limits_{x\in M_i} |x\, K_d|\ge 0. 
\end{equation}
This means that there exists $0 < \varepsilon_i < d_i - \sup\limits_{x\in M_i} |x\, K_d|$ such that for all points $x\in M_i$ we have $U_{d_i - \varepsilon_i}(x)\cap K_d\neq \emptyset$. But then for any $i$,
\begin{equation}
M_i\subset U_{d_i - \varepsilon_i}(K_d)\subset B_{d_i - \varepsilon_i}(K_d)\subset B_{\varepsilon_i}\bigl(B_{d_i - \varepsilon_i}(K_d)\bigr).
\end{equation}
By Lemma~\ref{bound} we have $\Bigl|B_{d_i - \varepsilon_i}(K_d) \,\, \partial B_{\varepsilon_i}\bigl(B_{d_i - \varepsilon_i}(K_d)\bigr)\Bigr| = \varepsilon_i > 0$. Moreover, by Lemma~\ref{sum_1} we have $B_{\varepsilon_i}\bigl(B_{d_i - \varepsilon_i}(K_d)\bigr) = B_{d_i}(K_d)$. Thus, $\bigl|M_i\, \partial B_{d_i}(K_d)\bigr| > 0$. Hence, because of~(\ref{eq:positive}), the following quantity is positive:
\begin{equation}
\gamma_i = \min\Bigl\{d_i, \bigl|M_i\, \partial B_{d_i}(K_d)\bigr|\Bigr\}.
\end{equation}
Consequently, since the number of boundary sets is finite, we obtain
\begin{equation}
\gamma = \min\limits_{i\in [1, n]} \gamma_i > 0.
\end{equation}
By condition, all $M_i$ are convex. Then by Lemma~\ref{lem:convexity}, the sets $B_{d_i}(M_i)$ are convex. Hence, $K_d$ is also convex. So, for any $i$, by Proposition~\ref{prop:free_space} we have
\begin{equation}\label{eq:reduction}
M_i \subset B_{d_i-\gamma}(K_d).
\end{equation}
In particular, expression~(\ref{eq:reduction}) holds for the index $s$ from the condition of the theorem. Recall that $d_H(M_s, K_d) = d_s < \infty$. Since $d_s > \sup\limits_{x\in M_s} |x\, K_d|$, then we have $d_s\in \bigl(|M_s\, K_d|, \infty\bigr)$, due to Lemma~\ref{lem:equal}. By the condition, for all $r\in \bigl(|M_s\, K_d|, \infty\bigr)$, sets $B_r(M_s)\cap K_d$ belong to the same finiteness class $\mathcal{P}_{\mathrm{\Cl, \Conv}}^f(X)$ as all $M_i$. Therefore, by Theorem~\ref{cont_criterion}, the set $B_{d_s}(M_s)\cap K_d$ varies continuously in the Hausdorff sense under small perturbations of $d_s$.

Thus, for $0<\varepsilon<\gamma$ there exists $\delta>0$ such that
\begin{equation}
K_d = B_{d_s}(M_s)\cap K_d \subset B_{\varepsilon}\bigl(B_{d_s-\delta}(M_s)\cap K_d\bigr).
\end{equation}
Denote $B_{d_s-\delta}(M_s)\cap K_d$ by $K$. Thus, $K_d\subset B_{\varepsilon}(K)$. Consequently, for any $i$, by Lemma~\ref{sum_1},
\begin{equation}
M_i \subset B_{d_i-\gamma}(K_d) \subset B_{d_i-\gamma}\bigl(B_{\varepsilon}(K)\bigr) = B_{d_i-\gamma+\varepsilon}(K).
\end{equation}
Moreover, $K\subset K_d\subset B_{d_i}(M_i)$ for all $i$ and $K\subset B_{d_s-\delta}(M_s)$. Hence $d_H(K, M_i)\le d_i$ and $d_H(K, M_s)\le \max(d_s-\delta, d_s-\gamma+\varepsilon) < d_s$. Consequently, $S_{\mathcal{M}}(K) < S_{\mathcal{M}}(K_d)$, which is a contradiction. The theorem is proved.\qed

\end{proof}

\begin{proposition}\label{prop:one_side}
Let $X$ be a metric space, $\mathcal{M} = \{M_1,\ldots , M_n\}\subset \mathcal{P}_{\Cl}^f(X)$, $\Sigma(\mathcal{M})\neq \emptyset$ and $d\in \Omega(\mathcal{M})$. Suppose also that $d_i = \sup\limits_{x\in M_i} |x\, K_d|$. Then, for any $K\in \Sigma_d(\mathcal{M})$, it holds $d_i = \sup\limits_{x\in M_i} |x\, K|$.
\end{proposition}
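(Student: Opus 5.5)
The plan is a two-sided squeeze on the quantity $\sup_{x\in M_i}|x\,K|$. One bound comes from monotonicity of the distance to a set under inclusion. The other comes from the fact that $K$ lies in the same class $\Sigma_d(\mathcal{M})$.

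First, the lower bound. Take an arbitrary $K\in \Sigma_d(\mathcal{M})$. By Proposition~\ref{prop:great}, $K_d$ is the greatest element of $\Sigma_d(\mathcal{M})$, so $K\subset K_d$. For every point $x\in X$, enlarging the set over which the infimum in Definition~\ref{definition:one} is taken can only decrease it, so $|x\,K|\ge |x\,K_d|$. Taking the supremum over $x\in M_i$ and using the hypothesis gives
\begin{equation}
\sup\limits_{x\in M_i}|x\,K|\ \ge\ \sup\limits_{x\in M_i}|x\,K_d| = d_i .
\end{equation}

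Second, the upper bound. Since $K\in\Sigma_d(\mathcal{M})$, we have $d_H(K,M_i)=d_i$. Definition~\ref{distance_H2} expresses $d_H(K,M_i)$ as the maximum of the two one-sided suprema, so
\begin{equation}
\sup\limits_{x\in M_i}|x\,K|\ \le\ d_H(K,M_i)=d_i .
\end{equation}
Combining the two inequalities gives $d_i=\sup_{x\in M_i}|x\,K|$. All quantities involved are finite, because every element of the class lies in the finiteness class $\mathcal{P}_{\Cl}^f(X)$ (see the remark after the definition of $\Sigma_d(\mathcal{M})$), so no case $\infty$ needs separate treatment.

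I do not expect a genuine obstacle. The only point needing care is to invoke Proposition~\ref{prop:great} for the inclusion $K\subset K_d$, rather than assuming $K_d$ is a minimal or otherwise special element.
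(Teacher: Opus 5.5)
Your proposal is correct and is essentially the paper's proof. The paper squeezes the same way: $d_i=d_H(M_i,K)$ bounds the one-sided supremum from above, and $K\subset K_d$ gives the lower bound $\sup_{x\in M_i}|x\,K|\ge\sup_{x\in M_i}|x\,K_d|=d_i$. The paper uses the inclusion $K\subset K_d$ without comment, and you correctly justify it by Proposition~\ref{prop:great}.
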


\begin{proof}

Since $K\in \Sigma_d(\mathcal{M})$, then $d_i = d_H(M_i, K) = \max\{\sup\limits_{x\in M_i} |x\, K|, \sup\limits_{x\in K} |x\, M_i|\}\ge \sup\limits_{x\in M_i} |x\, K|$. But because of $K\subset K_d$, we have $d_i = \sup\limits_{x\in M_i} |x\, K_d|\le \sup\limits_{x\in M_i} |x\, K|$. Thus, $d_i = \sup\limits_{x\in M_i} |x\, K|$.\qed

\end{proof}

The next corollary follows from Theorem~\ref{thm:one_side} and Proposition~\ref{prop:one_side}.

\begin{corollary}\label{cor:one_side}
Let $X$ be a real normed space, 
\begin{equation}
\mathcal{M} = \{M_1,\ldots , M_n\}\subset \mathcal{P}_{\Cl, \Conv}^f(X), 
\end{equation}
$\Sigma(\mathcal{M})\neq \emptyset$ and $d\in \Omega(\mathcal{M})$. Suppose also that there exists an index $s$ such that all nonempty sets of the form $B_r(M_s)\cap K_d$ belong to the same finiteness class $\mathcal{P}_{\Cl, \Conv}^f(X)$ to which all $M_i$ belong. Then, for any $K\in \Sigma_d(\mathcal{M})$, there exist $i$ such that $d_i = \sup\limits_{x\in M_i} |x\, K|$.
\end{corollary}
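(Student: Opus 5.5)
The corollary follows by chaining Theorem~\ref{thm:one_side} with Proposition~\ref{prop:one_side}. The plan is to get a one-sided distance for the greatest element $K_d$ first, and then transfer it to an arbitrary element of the class.

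First, I check that the hypotheses of Theorem~\ref{thm:one_side} are exactly those of the corollary:
\begin{itemize}
\item $X$ is a real normed space;
\item $\mathcal{M}\subset \mathcal{P}_{\Cl, \Conv}^f(X)$;
\item $\Sigma(\mathcal{M})\neq\emptyset$ and $d\in\Omega(\mathcal{M})$;
\item there is an index $s$ such that every nonempty $B_r(M_s)\cap K_d$ lies in the same finiteness class as the $M_i$.
\end{itemize}
Theorem~\ref{thm:one_side} therefore gives an index $i$ with $d_i=\sup_{x\in M_i}|x\,K_d|$. This index depends only on $d$ and $\mathcal{M}$, not on any particular solution. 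I should also make sure $K_d$ is a legitimate element of the class, which is Proposition~\ref{prop:great}: $K_d\in\Sigma_d(\mathcal{M})$ and $K\subset K_d$ for every $K\in\Sigma_d(\mathcal{M})$.

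Second, I fix an arbitrary $K\in\Sigma_d(\mathcal{M})$ and apply Proposition~\ref{prop:one_side} with the index $i$ found above. It works in any metric space, so in particular in $X$, and it needs only $d\in\Omega(\mathcal{M})$, $\Sigma(\mathcal{M})\neq\emptyset$ and the equality $d_i=\sup_{x\in M_i}|x\,K_d|$. Its conclusion is $d_i=\sup_{x\in M_i}|x\,K|$. The mechanism behind it is a two-sided bound:
\begin{itemize}
\item monotonicity of $|x\,\cdot|$ under the inclusion $K\subset K_d$ gives $\sup_{x\in M_i}|x\,K|\ge \sup_{x\in M_i}|x\,K_d|=d_i$;
\item the definition of $d_H$ gives $\sup_{x\in M_i}|x\,K|\le d_H(M_i,K)=d_i$.
\end{itemize}

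There is no genuine obstacle here, since both ingredients are already established. The only point needing care is that the index $i$ comes from $K_d$ and is then used uniformly for all $K$ in the class. This is legitimate because Proposition~\ref{prop:one_side} places no restriction on $K$ beyond membership in $\Sigma_d(\mathcal{M})$.
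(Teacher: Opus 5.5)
Your proof is correct and follows the paper's argument: Theorem~\ref{thm:one_side} gives an index $i$ with $d_i=\sup_{x\in M_i}|x\,K_d|$, and Proposition~\ref{prop:one_side}, which relies on $K\subset K_d$ from Proposition~\ref{prop:great}, transfers this equality to every $K\in\Sigma_d(\mathcal{M})$. The two-sided bound you wrote out is exactly the proof of that proposition.
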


Also according to Theorem~\ref{thm:2_dim} and Corollary~\ref{cor:one_side}, we also have the following corollary.

\begin{corollary}\label{cor:one_side_2_dim}
Let $X$ be a real normed space, dimension of $X$ does not exceed $2$, 
\begin{equation}
\mathcal{M} = \{M_1,\ldots , M_n\}\subset \mathcal{P}_{\Cl, \Conv}^f(X), 
\end{equation}
$\Sigma(\mathcal{M})\neq \emptyset$ and $d\in \Omega(\mathcal{M})$. Then, for any $K\in \Sigma_d(\mathcal{M})$, there exist $i$ such that $d_i = \sup\limits_{x\in M_i} |x\, K|$.
\end{corollary}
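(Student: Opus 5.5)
The plan is to derive Corollary~\ref{cor:one_side_2_dim} directly from Corollary~\ref{cor:one_side}. That corollary already handles the conclusion once its technical hypothesis holds, so the whole task is to verify that hypothesis in dimension at most $2$. I will check it for every index $s$, although a single $s$ would suffice. Concretely, I must show: every nonempty set $B_r(M_s)\cap K_d$ is closed, convex, and at finite Hausdorff distance from the $M_i$.

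First, the closed convex part. By Lemma~\ref{lem:Closed} each $B_r(M_s)$ is closed, and $K_d$ is closed as a finite intersection of closed balls. By Lemma~\ref{lem:convexity} all the $B_{d_i}(M_i)$ and $B_r(M_s)$ are convex (for $r>0$), hence so is $K_d$. So every nonempty $B_r(M_s)\cap K_d$ lies in $\mathcal{P}_{\Cl,\Conv}(X)$. The case $r=0$ is handled the same way, since $B_0(M_s)=M_s$ is closed and convex.

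Second, the finiteness class. Note that $B_r(M_s)\cap K_d=F_r(M_s,K_d)$, and it is nonempty exactly when $r\ge |M_s\,K_d|$ (and possibly at the endpoint). Since $K_d\in\Sigma_d(\mathcal{M})$ by Proposition~\ref{prop:great}, we have $d_H(K_d,M_s)=d_s<\infty$, so $K_d$ lies in the finiteness class of the $M_i$. For $r\in(|M_s\,K_d|,\infty)$, Theorem~\ref{thm:2_dim}, applied with $A=M_s$ and $B=K_d$ (both convex), gives that all the $F_r$ are at pairwise finite distance. For $r\ge d_s$ we have $K_d\subset B_{d_s}(M_s)$, so $F_r=K_d$. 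Hence every $F_r$ with $r>|M_s\,K_d|$ is at finite distance from $K_d$ and therefore from each $M_i$.

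The main obstacle is the possible endpoint $r=|M_s\,K_d|$, where $F_r$ may be nonempty but is not covered by Theorem~\ref{thm:2_dim}. I have two ways to handle it. The first is to observe that the proof of Theorem~\ref{thm:one_side} only uses $r$ in the open interval $(|M_s\,K_d|,\infty)$, near $d_s$, when it invokes Theorem~\ref{thm:cont_criterion}, so the endpoint is irrelevant there. The second is to show directly that $F_{r_0}\subset F_r$ and that $F_r\subset B_{\varepsilon}(F_{r_0})$ fails to be infinite only in degenerate planar configurations; I would avoid this route if possible. I will present the argument via the first observation, stating that the hypothesis is used only for $r>|M_s\,K_d|$, and then conclude by Corollary~\ref{cor:one_side}.
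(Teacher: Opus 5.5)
Your proposal is correct and follows the paper's route: the paper derives this corollary in one line by noting that Theorem~\ref{thm:2_dim}, applied to $A=M_s$ and $B=K_d$, supplies the technical hypothesis of Corollary~\ref{cor:one_side}. Your extra steps fill in details the paper leaves implicit. First, you use $F_r(M_s,K_d)=K_d$ for $r\ge d_s$ to place every $F_r$ in the finiteness class of the $M_i$. Second, you observe that the proof of Theorem~\ref{thm:one_side} only uses $r\in\bigl(|M_s\,K_d|,\infty\bigr)$, so the possibly nonempty endpoint set at $r=|M_s\,K_d|$, which Theorem~\ref{thm:2_dim} does not cover, is harmless.
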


Also note that, for any $A, B\in \mathcal{P}_{\Cl, \B, \Conv}(X)$, we have $d_H(A, B) < \infty$. Hence, from Corollary~\ref{cor:one_side}, we obtain the following corollary.

\begin{corollary}\label{cor:one_side_bound}
Let $X$ be a real normed space, 
\begin{equation}
\mathcal{M} = \{M_1,\ldots , M_n\}\subset \mathcal{P}_{\Cl, \B, \Conv}(X), 
\end{equation}
$\Sigma(\mathcal{M})\neq \emptyset$ and $d\in \Omega(\mathcal{M})$. Then, for any $K\in \Sigma_d(\mathcal{M})$, there exist $i$ such that $d_i = \sup\limits_{x\in M_i} |x\, K|$.
\end{corollary}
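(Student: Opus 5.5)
The corollary should follow directly from Corollary~\ref{cor:one_side} once its technical hypothesis is checked. So the plan is only to verify that hypothesis for bounded closed convex sets.

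First, we fix the setting. Since any two elements of $\mathcal{P}_{\Cl, \B, \Conv}(X)$ are at finite Hausdorff distance, all of $\mathcal{P}_{\Cl, \B, \Conv}(X)$ lies in a single finiteness class $\mathcal{P}_{\Cl, \Conv}^f(X)$ of $\mathcal{P}_{\Cl, \Conv}(X)$. Therefore $\mathcal{M}$ is contained in that class.

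Next, we take any index $s$, say $s=1$. The key observation is that $K_d = \bigcap_{i} B_{d_i}(M_i)$ is contained in $B_{d_1}(M_1)$. Because $M_1$ is bounded, $B_{d_1}(M_1)$ is bounded, and hence $K_d$ is bounded. Now let $r\ge 0$ be such that $B_r(M_s)\cap K_d$ is nonempty. This set is:
\begin{itemize}
\item closed, by Lemma~\ref{lem:Closed} together with the closedness of $K_d$;
\item convex, by Lemma~\ref{lem:convexity} (for $r=0$ it equals $M_s\cap K_d$, which is convex directly), together with the convexity of $K_d$ as an intersection of convex sets;
\item bounded, as a subset of $K_d$.
\end{itemize}
Hence $B_r(M_s)\cap K_d\in \mathcal{P}_{\Cl, \B, \Conv}(X)$, so its Hausdorff distance to each $M_i$ is finite, and it lies in the same finiteness class as all the $M_i$.

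All hypotheses of Corollary~\ref{cor:one_side} are then satisfied, and it yields, for every $K\in\Sigma_d(\mathcal{M})$, an index $i$ with $d_i=\sup_{x\in M_i}|x\,K|$.

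There is no real obstacle here. The only points needing care are the boundedness of $K_d$, which uses that $d_1<\infty$ and that $M_1$ is bounded, and the remark that $K_d$ is convex and closed, which is already used in the proof of Theorem~\ref{thm:one_side}.
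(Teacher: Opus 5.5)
Your proposal is correct and follows the paper's route: the paper derives this corollary from Corollary~\ref{cor:one_side} with the single remark that any two sets in $\mathcal{P}_{\Cl, \B, \Conv}(X)$ are at finite Hausdorff distance. You additionally make explicit what that remark tacitly requires, namely that each nonempty $B_r(M_s)\cap K_d$ is itself closed, convex and bounded (boundedness coming from $K_d\subset B_{d_1}(M_1)$ with $d_1<\infty$), so that it lies in the same finiteness class as the $M_i$.
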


Since $\mathcal{P}_{\Comp, \Conv}(X)\subset \mathcal{P}_{\Cl, \B, \Conv}$ and a continuous function on a compact set always attains its supremum, from Corollary~\ref{cor:one_side_bound} we also obtain the following corollary.

\begin{corollary}
Let $X$ be a real normed space, 
\begin{equation}
\mathcal{M} = \{M_1,\ldots , M_n\}\subset \mathcal{P}_{\Comp, \Conv}(X), 
\end{equation}
$\Sigma(\mathcal{M})\neq \emptyset$ and $d\in \Omega(\mathcal{M})$. Then, for any $K\in \Sigma_d(\mathcal{M})$, there exist $i$ and $x\in M_i$ such that $U_{d_i}(x)\cap K = \emptyset$. In particular, there exists $M_i\in \mathcal{M}$ which has $d$-far point.
\end{corollary}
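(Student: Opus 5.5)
The plan is to take $K$ first, find a suitable index $i$ for it, and then pass to the point statement by compactness. Apply Corollary~\ref{cor:one_side_bound}, which is legitimate because $\mathcal{P}_{\Comp, \Conv}(X)\subset\mathcal{P}_{\Cl, \B, \Conv}(X)$. It gives an index $i$ with
\begin{equation*}
d_i=\sup\limits_{x\in M_i}|x\, K|.
\end{equation*}
Since $K\in\Sigma_d(\mathcal{M})$ is nonempty, the function $x\mapsto |x\, K|$ is continuous on $X$, by the Remark following Definition~\ref{definition:one} (or by Proposition~\ref{UnCont}).

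$M_i$ is compact, so this continuous function attains its supremum on $M_i$. Hence there is a point $x\in M_i$ with $|x\, K|=d_i$. For every $y\in K$ this gives $|x\, y|\ge |x\, K|=d_i$, so no point of $K$ lies in the open ball $U_{d_i}(x)$. That is,
\begin{equation*}
U_{d_i}(x)\cap K=\emptyset,
\end{equation*}
which is the first claim.

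For the second claim I take $K=K_d$. This is allowed because $K_d\in\Sigma_d(\mathcal{M})$ by Proposition~\ref{prop:great}. The first claim then yields $i$ and $x\in M_i$ with $U_{d_i}(x)\cap K_d=\emptyset$, which is exactly Definition~\ref{dfn:far} of a $d$-far point of $M_i$.

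There is no real obstacle here. The only points to check are that $K$ is nonempty, so that the distance function is finite and continuous, and that Corollary~\ref{cor:one_side_bound} applies to compact convex sets via the inclusion above. All the substantive work was done in Theorem~\ref{thm:one_side}.
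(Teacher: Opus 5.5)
Your proposal is correct and follows the paper's own argument. You apply Corollary~\ref{cor:one_side_bound} via the inclusion $\mathcal{P}_{\Comp,\Conv}(X)\subset\mathcal{P}_{\Cl,\B,\Conv}(X)$, then use compactness of $M_i$ and continuity of $x\mapsto|x\,K|$ to attain the supremum $d_i$, giving $U_{d_i}(x)\cap K=\emptyset$, and your choice $K=K_d$ (justified by Proposition~\ref{prop:great}) for the $d$-far point claim is exactly what the paper's ``in particular'' intends.
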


A natural question arises: for $K\in \Sigma_d(\mathcal{M})$, can all distances in the functional $S_{\mathcal{M}}(K)$ have the view $d_i = \sup\limits_{x\in M_i} |x\, K| > \sup\limits_{x\in K} |x\, M_i|$. It turns out that, for example, this is not true in a connected metric space for the set $K_d\in \Sigma_d(\mathcal{M})$.

\begin{theorem}\label{thm:one_side_two}
Let $X$ be a connected metric space, 
\begin{equation}
\mathcal{M} = \{M_1,\ldots , M_n\}\subset \mathcal{P}_{\Cl}^f(X), 
\end{equation}
$\Sigma(\mathcal{M})\neq \emptyset$ and $d\in \Omega(\mathcal{M})$. Then there exists $i$ such that $d_i = \sup\limits_{x\in K_d} |x\, M_i|$.
\end{theorem}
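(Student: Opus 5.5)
The plan is to argue by contradiction, using connectedness of $X$ to show that $K_d$ would have to be all of $X$, which is absurd.

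First, the degenerate case. If $d_i=0$ for some $i$, then $0\le \sup\limits_{x\in K_d}|x\,M_i|\le d_H(K_d,M_i)=0=d_i$, and that $i$ works. So assume all $d_i>0$.

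Now suppose that for every $i$ we have $\sup\limits_{x\in K_d}|x\,M_i|<d_i$. Since there are finitely many indices, there is a single $\varepsilon$ with $0<\varepsilon<\min_i d_i$ such that
\begin{equation*}
K_d\subset B_{d_i-\varepsilon}(M_i)\quad\text{for all } i .
\end{equation*}
Take any $y\in U_\varepsilon(K_d)$, so $|y\,k|<\varepsilon$ for some $k\in K_d$. By Proposition~\ref{UnCont}, $|y\,M_i|\le |y\,k|+|k\,M_i|<\varepsilon+(d_i-\varepsilon)=d_i$. Hence $y\in\bigcap_i B_{d_i}(M_i)=K_d$. This gives $U_\varepsilon(K_d)\subset K_d$. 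Since $K_d\subset U_\varepsilon(K_d)$ trivially, $K_d=U_\varepsilon(K_d)$, and $K_d$ is open by Lemma~\ref{lem:Opened}. Note that only the triangle inequality is used here, not Lemma~\ref{sum_1}, which is stated for normed spaces.

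The set $K_d$ is also closed, as an intersection of closed balls (Lemma~\ref{lem:Closed}). It is nonempty because $K_d\in\Sigma_d(\mathcal{M})\subset\mathcal{P}_{\Cl}^f(X)$ by Proposition~\ref{prop:great}. Since $X$ is connected, $K_d=X$.

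Then every point of $M_i$ lies in $K_d$, so $\sup\limits_{x\in M_i}|x\,K_d|=0$. By Definition~\ref{distance_H2},
\begin{equation*}
d_i=d_H(K_d,M_i)=\max\Bigl\{\sup\limits_{x\in K_d}|x\,M_i|,\,0\Bigr\}\le d_i-\varepsilon<d_i,
\end{equation*}
a contradiction. Hence some index satisfies $d_i=\sup\limits_{x\in K_d}|x\,M_i|$.

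The main obstacle is the second step, namely obtaining a uniform margin $\varepsilon$ and checking that the $\varepsilon$-neighbourhood of $K_d$ stays inside $K_d$. This is also where finiteness of $n$ and the positivity of all $d_i$ are needed.
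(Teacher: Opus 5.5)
Your proof is correct and follows essentially the paper's argument: assume $\sup_{x\in K_d}|x\,M_i|<d_i$ for all $i$, show the nonempty set $K_d$ is both open and closed, and invoke connectedness of $X$. The differences are cosmetic. The paper gets openness directly from $K_d=\bigcap_i U_{d_i}(M_i)$ and treats $K_d=X$ as a separate case, whereas you obtain openness from a uniform margin $\varepsilon$ via $U_\varepsilon(K_d)=K_d$ and fold the case $K_d=X$ into the contradiction.
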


\begin{proof}

If $K_d = X$, then, since $M_i\subset X$, we obtain 
\begin{equation}
d_i = d_H(M_i, X) = \max\{\sup_{y\in M_i} |y\, X|, \sup_{x\in X} |x\, M_i|\} = \sup_{x\in X} |x\, M_i|.
\end{equation}

Now suppose $K_d \neq X$. Assume that for all $i$ the strict inequality $\sup_{x\in K_d} |x\, M_i| < d_i$ holds. This means that $K_d \subset U_{d_i}(M_i)$ for all $i$. Consequently, $K_d \subset \bigcap_{i=1}^n U_{d_i}(M_i)$. Moreover, $\bigcap_{i=1}^n U_{d_i}(M_i) \subset \bigcap_{i=1}^n B_{d_i}(M_i) = K_d$. Hence,
\begin{equation}
K_d = \bigcap_{i=1}^n U_{d_i}(M_i).
\end{equation}
But then, by Lemmas~\ref{lem:Closed} and~\ref{lem:Opened}, the set $K_d$ is simultaneously closed and open. In view of the connectedness of the space $X$, any simultaneously closed and open subset of $X$ coincides with $\emptyset$ or with $X$. However, $K_d$ is nonempty and by assumption $K_d \neq X$. We obtain a contradiction. Therefore, there exists an $i$ such that $d_i = \sup\limits_{x\in K_d} |x\, M_i|$.\qed

\end{proof}

%
%

\end{document}